\newtheorem{main}{Theorem}
\newtheorem{theorem}{Theorem}[section]
\newtheorem{lemma}[theorem]{Lemma}
\newtheorem{conjecture}[theorem]{Conjecture}
\newtheorem{corollary}[theorem]{Corollary}
\newtheorem{remark}[theorem]{Remark}
\newtheorem{observation}[theorem]{Observation}
\newtheorem*{notat*}{Notation}
\title{Cycle lengths in randomly perturbed graphs}
\author{Elad Aigner-Horev \thanks{Department of Computer Science, Ariel University, Ariel 40700, Israel. Email: {\tt horev@ariel.ac.il}.}
\quad Dan Hefetz \thanks{Department of Computer Science, Ariel University, Ariel 40700, Israel. Email: {\tt danhe@ariel.ac.il}. Research supported by ISF grant 822/18.}
\quad Michael Krivelevich \thanks{School of Mathematical Sciences, Tel Aviv University, Tel Aviv 6997801, Israel. Email: {\tt krivelev@tauex.tau.ac.il}. Research supported in part by USA--Israel BSF grant 2018267.}}
\begin{document}
\maketitle

\begin{abstract}
Let $G$ be an $n$-vertex graph, where $\delta(G) \geq \delta n$ for some $\delta := \delta(n)$. A result of Bohman, Frieze and Martin from 2003 asserts that if $\alpha(G) = O \left(\delta^2 n \right)$, then perturbing $G$ via the addition of $\omega \left(\frac{\log(1/\delta)}{\delta^3} \right)$ random edges, asymptotically almost surely (a.a.s. hereafter) results in a Hamiltonian graph. This bound on the size of the random perturbation is only tight when $\delta$ is independent of $n$ and deteriorates as to become uninformative when $\delta = \Omega \left(n^{-1/3} \right)$.  

We prove several improvements and extensions of the aforementioned result. First, keeping the bound on $\alpha(G)$ as above and allowing for $\delta = \Omega(n^{-1/3})$, we determine the correct order of magnitude of the number of random edges whose addition to $G$ a.a.s. results in a pancyclic graph. Our second result ventures into significantly sparser graphs $G$; it delivers an almost tight bound on the size of the random perturbation required to ensure pancyclicity a.a.s., assuming $\delta(G) = \Omega \left((\alpha(G) \log n)^2 \right)$ and $\alpha(G) \delta(G) = O(n)$. 

Assuming the correctness of Chv\'atal's toughness conjecture, allows for the mitigation of the condition $\alpha(G) = O \left(\delta^2 n \right)$ imposed above, by requiring $\alpha(G) = O(\delta(G))$ instead; our third result determines, for a wide range of values of $\delta(G)$, the correct order of magnitude of the size of the random perturbation required to ensure the a.a.s. pancyclicity of $G$.

For the emergence of nearly spanning cycles, our fourth result determines, under milder conditions, the correct order of magnitude of the size of the random perturbation required to ensure that a.a.s. $G$ contains such a cycle.
\end{abstract}

\section{Introduction}  

A \emph{Hamilton cycle} in a graph $G$ is a cycle passing through all vertices of $G$. A graph is said to be \emph{Hamiltonian} if it contains a Hamilton cycle. Hamiltonicity is one of the most central notions in graph theory, and has been studied extensively in the last 70 years. Considerable effort has been devoted to obtaining  sufficient conditions for the existence of a Hamilton cycle (an effective necessary and sufficient condition should not be expected however, as deciding whether a given graph contains a Hamilton cycle is known to be NP-complete). One of the earliest results in this direction is the celebrated theorem of Dirac~\cite{Dirac}, asserting
that every $n$-vertex graph $H$ (on at least three vertices) with minimum degree $\delta(H) \geq n/2$ is Hamiltonian. Since then, many other sufficient conditions that deal with dense graphs were obtained (see, e.g., the comprehensive references~\cite{Gould, KO}). When looking into the Hamiltonicity of sparser  graphs, it is natural to consider random graphs with an appropriate edge probability. Erd\H{o}s and R\'enyi~\cite{ER2} raised the problem of determining the threshold probability of Hamiltonicity in the binomial random graph model $\mathbb{G}(n,p)$. Following a series of efforts by various researchers, including Korshunov~\cite{Korshunov} and P\'osa~\cite{Posa}, the problem was finally solved by Koml\'os and Szemer\'edi~\cite{KSz} and independently by Bollob\'as~\cite{Bollobas}, who proved that if $p := p(n) \geq (\log n + \log \log n + \omega(1))/n$, where the $\omega(1)$ term tends to infinity with $n$ arbitrarily slowly, then $G \sim \mathbb{G}(n, p)$ is asymptotically almost surely (a.a.s. for brevity, hereafter) Hamiltonian. This is best possible since for $p \leq (\log n + \log \log n - \omega(1))/n$ a.a.s. there are vertices of degree at most one in $G \sim \mathbb{G}(n, p)$.

An $n$-vertex graph is said to be \emph{pancyclic} if it contains a cycle of length $\ell$ for every $3 \leq \ell \leq n$. Every pancyclic graph is Hamiltonian whereas the converse implication does not hold in general. A meta-conjecture of Bondy~\cite{BondyConj} asserts that almost any non-trivial sufficient condition for Hamiltonicity ensures pancyclicity as well (a simple family of exceptional graphs may exist). This turns out to be true for the condition appearing in Dirac's Theorem --- it was proved by Bondy himself~\cite{Bondy} that every $n$-vertex graph $H$ with minimum degree $\delta(H) \geq n/2$ is either $K_{n/2, n/2}$ or pancyclic. Bondy's meta-conjecture holds true for random graphs as well --- Cooper and Frieze~\cite{CP} proved that $G \sim \mathbb{G}(n,p)$ is a.a.s. pancyclic whenever $p := p(n) \geq (\log n + \log \log n + \omega(1))/n$.

The aforementioned theorem of Dirac is optimal in terms of the minimum degree, that is, for every $0 < \delta := \delta(n) < 1/2$, there are non-Hamiltonian graphs with minimum degree at most $\delta n$. Nevertheless, Bohman, Frieze, and Martin~\cite{BFM} discovered that once {\sl slightly randomly perturbed}, graphs with linear minimum degree become Hamiltonian asymptotically almost surely. Formally, they proved that for every constant $\delta > 0$ there exists a constant $C := C(\delta)$ such that $H \cup R$ is a.a.s. Hamiltonian, whenever $H$ is an $n$-vertex graph with minimum degree at least $\delta n$ and $R \sim \mathbb{G}(n, C/n)$; undershooting the threshold for Hamiltonicity in $\mathbb{G}(n,p)$ by a logarithmic factor. Such a result can be seen as \emph{bridging} between results regarding the  Hamiltonicity of dense graphs and the emergence of such cycles in random graphs. Numerous results related to Hamiltonicity of randomly perturbed graphs (and hypergraphs) have since appeared; see, e.g.,~\cite{AF, AH, BHKM, DRRS, HZ, KKS, MM}.    

Bohman, Frieze, and Martin noted in~\cite{BFM} that their aforementioned result is tight (up to a constant factor) as there are $n$-vertex graphs with minimum degree $\Omega(n)$ which cannot be made Hamiltonian (even deterministically) by the addition of $c n$ edges for some sufficiently small constant $c > 0$. Indeed, consider for example the complete bipartite graph $G := K_{n/3, 2n/3}$. It is evident that $e(H) \geq n/3$ for any graph $H$ with vertex-set $V(G)$ for which $G \cup H$ is Hamiltonian. It is suggested in~\cite{BFM} that the reason for $K_{n/3, 2n/3}$ requiring so many additional edges to become Hamiltonian is that it admits a large independent set. Subsequently, they have proved the following result.

\begin{theorem} [Theorem 2 in~\cite{BFM} -- abridged] \label{th::BFMalpha}      
Let $G$ be an $n$-vertex graph with minimum degree at least $\delta n$ for some $\delta = \delta(n)$. Let $R \sim {\mathbb G}(n,p)$, where $p := p(n) = \omega \left( \frac{\log(1/\delta)}{\delta^3 n^2} \right)$. If $\alpha(G) < \delta^2 n/2$, then $G \cup R$ is a.a.s. Hamiltonian.
\end{theorem}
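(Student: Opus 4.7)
The proof would follow the standard sprinkling-plus-P\'osa-rotation framework for Hamiltonicity of randomly perturbed graphs.

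\textbf{Expansion in $G$.} The conditions $\delta(G) \geq \delta n$ and $\alpha(G) < \delta^2 n/2$ together force strong local expansion. For any $S \subseteq V(G)$ with $|S| \leq \alpha(G)$, pick any $v \in S$; of its $\delta n$ neighbors, at most $|S|-1 < \alpha(G)$ lie in $S$, so $|N_G(S) \setminus S| \geq \delta n - |S| \geq \delta n/2$. Additionally, every set of size greater than $\alpha(G)$ spans an edge. Hence $G$ is a robust local expander on all sets of size up to $\alpha(G) = \Theta(\delta^2 n)$, with expansion ratio $\Omega(1/\delta)$.

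\textbf{Booster production via rotations.} For any graph $H$ on $V(G)$ containing $G$ that is not yet Hamiltonian, I would consider a longest path $P$ in $H$ and apply P\'osa rotations at each endpoint. Building on the expansion from $G$, the set of alternative endpoints reachable from a single endpoint via a sequence of rotations has size $\Omega(\delta n)$. Performing rotations at both endpoints yields a family of $\Omega(\delta^2 n^2)$ \emph{booster} pairs $(u,v)$ for which adding $uv$ to $H$ either produces a strictly longer path, closes a Hamilton cycle, or absorbs a previously uncovered vertex or component.

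\textbf{Sprinkling.} Expose $R$ in $t$ independent rounds, each distributed as $\mathbb{G}(n, p/t)$, where $t$ is chosen on the order of the number of iterative improvements needed (at most $n$, and substantially smaller under our hypotheses). Conditional on $H_i := G \cup R_1 \cup \ldots \cup R_i$ being non-Hamiltonian, the probability that $R_{i+1}$ misses every booster of $H_i$ is at most $(1 - c\delta^2)^{\Theta(n^2 p/t)} \leq \exp(-c\delta^2 n^2 p/t)$. With $p = \omega(\log(1/\delta)/(\delta^3 n^2))$ and $t$ chosen appropriately, the per-round failure probability is $o(1/t)$; a union bound over the $t$ rounds completes the argument.

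\textbf{Main obstacle.} The hypotheses do not force $G$ to be connected --- for example, $G$ could be the disjoint union of $\Theta(1/\delta)$ cliques of size $\Theta(\delta n)$ --- so the P\'osa rotation argument cannot be launched directly on $G$. Part of the random budget must first be used to stitch the components together into a single path structure, after which the rotation--booster mechanism applies. The most delicate point is maintaining a booster density of $\Omega(\delta^2 n^2)$ throughout the absorption/extension process and calibrating $t$ against the per-round success probability; this balance is precisely what produces the $\delta^3$ and $\log(1/\delta)$ factors in the hypothesis on $p$.
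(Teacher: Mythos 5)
Theorem~\ref{th::BFMalpha} is quoted verbatim from Bohman, Frieze and Martin~\cite{BFM}; the present paper does not reprove it, so there is no in-text proof to compare against. That said, your sketch hits the right framework (expansion $\Rightarrow$ many P\'osa boosters $\Rightarrow$ multi-round sprinkling), which is also the engine behind~\cite{BFM}.

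The genuine gap is the round count $t$, which is exactly what supplies the third power of $\delta$. You assert that $t$ is ``substantially smaller'' than $n$ under the hypotheses without deriving any bound. A pure longest-path--extension scheme gains only one vertex per successful round, so $t$ can be as large as $\Theta(n)$, which would give $p = \omega(n\log n/(\delta^2 n^2))$ --- far above the target. What is actually needed (and what this paper's own Theorem~\ref{th::FewCyclesExpansion}, described by the authors as an adaptation of the main part of the~\cite{BFM} proof, makes precise) is to first extract from $G$ a family of $O(1/\delta)$ vertex-disjoint cycles covering all but $O(1/\delta)$ vertices --- this is where the hypothesis $\alpha(G) < \delta^2 n/2$ earns its keep, by bounding the number of pieces and the size of the uncovered residue --- and then to spend each sprinkling round merging one piece into the growing cycle via the rotation/booster machinery. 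That yields $t = O(1/\delta)$ rounds, $\Omega(\delta^2 n^2)$ boosters per round, a per-round failure probability of $\exp\bigl(-\Omega(\delta^2 n^2 p/t)\bigr)$, and, after requiring this to be $o(\delta)$ and union-bounding over rounds, $p = \omega(\log(1/\delta)/(\delta^3 n^2))$ as claimed. A secondary point: your expansion computation is stated only for $|S|\le\alpha(G)=\Theta(\delta^2 n)$, which via P\'osa's lemma only forces the endpoint set past size $\Theta(\delta^2 n)$; the size $\Omega(\delta n)$ you then invoke requires $(\Theta(\delta n),2)$-expansion, which already follows from the minimum degree alone (for $|S|\le\delta n/3$ any $v\in S$ has at least $\delta n-|S|\ge 2|S|$ external neighbours). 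The independence-number hypothesis is there to control the piece count, not to feed the rotation step.
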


The bound on $p$ stated in Theorem~\ref{th::BFMalpha} is sharp whenever $\delta$ is constant. However, it is not sharp for $\delta = o(1)$. In fact, for $\delta = O(n^{- 1/3})$, this bound is larger than the threshold for Hamiltonicity in ${\mathbb G}(n,p)$, making it useless in this regime. Our first result determines the correct value of $p$. With some abuse of notation, throughout the paper we denote a union of the form $G \cup R$, where $G$ is a given $n$-vertex graph and $R \sim {\mathbb G}(n,p)$, by $G \cup {\mathbb G}(n,p)$.

\begin{main} \label{th::smallAlpha} 
There exist positive constants $c_1, c_2, c_3$ and $c_4$ such that the following hold.
\begin{description}
\item [(a)] Let $G$ be an $n$-vertex graph with minimum degree at least $\delta n$ for some $\delta := \delta(n) \geq c_1 n^{- 1/3}$. If $\alpha(G) \leq c_2 \delta^2 n$ and $p := p(n) \geq \max \left\{\omega \left(n^{-2} \right),  \frac{c_3 \log(1/\delta)}{\delta n^2} \right\}$, then $G \cup {\mathbb G}(n,p)$ is a.a.s. pancyclic.

\item [(b)] There exists an $n$-vertex graph $H$ with minimum degree $\lfloor n/2 \rfloor - 1$ and independence number $\alpha(H) = 2$ such that if $H \cup {\mathbb G}(n,p)$ is a.a.s. Hamiltonian, then $p := p(n) = \omega\left(n^{-2} \right)$.

\item [(c)] For every $\Omega(n^{- 1/3}) = \delta := \delta(n) = o(1)$ there exists an $n$-vertex graph $H$ with minimum degree at least $\delta n$ satisfying $\alpha(H) = O(\delta^2 n)$ such that a.a.s. $H \cup {\mathbb G}(n,p)$ is not Hamiltonian, whenever $p := p(n) \leq \frac{c_4 \log(1/\delta)}{\delta n^2}$.
\end{description}
\end{main}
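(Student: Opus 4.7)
\bigskip
\noindent\textbf{Proof plan.} The theorem has three parts: (b) and (c) are tightness constructions that each reduce to an explicit graph plus a short random-graph calculation, while part (a) is the main positive statement and will require the bulk of the work. The plan is to handle (b) and (c) first via isolation-style arguments, and then turn to (a), where we will sprinkle $R\sim\mathbb{G}(n,p)$ into a constant number of independent pieces and use each piece for a different sub-task.

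For part (b), the natural candidate for $H$ is the disjoint union of two cliques on $\lfloor n/2\rfloor$ and $\lceil n/2\rceil$ vertices joined by a single edge, so that $\delta(H)=\lfloor n/2\rfloor-1$ and $\alpha(H)=2$ (any three vertices contain two from the same clique). Every Hamilton cycle of $H\cup R$ must cross between the two cliques in at least two vertex-disjoint edges, so $R$ must supply at least one cross edge; for $p=O(n^{-2})$ the probability that $R$ contains no such edge is $(1-p)^{|A||B|}=\Theta(1)$, which rules out a.a.s.\ Hamiltonicity and forces $p=\omega(n^{-2})$. For part (c) we will take $H$ to be the disjoint union of $k=\Theta(1/\delta)$ cliques of approximately equal size $\delta n$, with sizes adjusted so that $\delta(H)\ge \delta n$; then $\alpha(H)=k=\Theta(1/\delta)$, which equals $O(\delta^2 n)$ thanks to $\delta\ge c_1 n^{-1/3}$. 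Hamiltonicity of $H\cup R$ requires connectedness, so $R$ must send an edge out of every clique. Letting $X$ count the cliques isolated in $R$, we have
\[
\E[X]=k(1-p)^{\delta n(n-\delta n)}=\Theta\!\left(k^{\,1-c_4}\right),
\]
which tends to infinity once $c_4<1$. Since pairs of cliques isolate quasi-independently---the joint-to-product-of-marginals ratio is $(1-p)^{-(\delta n)^2}=1+o(1)$, as $p(\delta n)^2=c_4\delta\log(1/\delta)=o(1)$---a Chebyshev/second-moment argument will yield $\Pr[X=0]=o(1)$, producing an a.a.s.\ disconnected $H\cup R$.

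Part (a) is the heart of the theorem. The plan is to first produce a Hamilton cycle in $G\cup R$ via a P\'osa-style rotation-extension argument inside $G$: here the bound $\alpha(G)\le c_2\delta^2 n$, coupled with $\delta(G)\ge \delta n$, will be used to guarantee at each rotation stage a ``booster'' set of $\Omega(\delta n)$ endpoint candidates, so that closing a rotated path via one random edge succeeds with probability $1-(1-p)^{\Omega(\delta^2 n^2)}$; tracking the logarithmic escape factor over iterations then gives exactly the announced threshold $p\gtrsim \log(1/\delta)/(\delta n^2)$, an improvement over Theorem~\ref{th::BFMalpha} by a factor of $\delta^{-2}$. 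With Hamiltonicity secured, cycles of every other length $3\le \ell<n$ will be produced via two complementary devices: (i) for $\ell$ close to $n$, we re-run the Hamilton-cycle argument inside an induced subgraph of $G\cup R$ on $\ell$ vertices, deleting $n-\ell$ vertices in a way that preserves both the minimum-degree and $\alpha$-hypotheses; (ii) for $\ell$ in the short-to-intermediate range, we exhibit the required cycles essentially inside $G$ itself, using its density $e(G)=\Omega(\delta n^2)$ together with classical pancyclicity tools such as Bondy-type cycle-extension along long paths.

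The hard part will be the intermediate-length regime in (a): there are only $\Theta(\log(1/\delta)/\delta)=o(n)$ random edges available, so a naive chord-based strategy cannot plausibly provide cycle-length adjustments of every size, and nearly all of the pancyclicity must therefore be extracted from $G$ itself, with $R$ acting only as a tiny connectivity patch. This is where the hypothesis $\alpha(G)\le c_2\delta^2 n$ will have to be leveraged most efficiently: it is precisely this bound, together with $\delta(G)\ge \delta n$, that forces $G$ to be locally dense enough to carry cycles of a wide range of lengths on its own.
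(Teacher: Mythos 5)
Your treatment of parts (b) and (c) is essentially correct and tracks the paper's constructions: both you and the paper use a disjoint union of two roughly equal cliques for (b) and a disjoint union of $\Theta(1/\delta)$ cliques of size $\approx\delta n$ for (c), with a second-moment/isolated-clique argument in (c). The minor difference in (b) (you add a single bridging edge; the paper leaves the cliques disconnected) is immaterial. One small slip in (c): you write $\alpha(H)=k=\Theta(1/\delta)=O(\delta^2 n)$ ``thanks to $\delta\ge c_1 n^{-1/3}$,'' but in part (c) the hypothesis is $\delta=\Omega(n^{-1/3})$ by fiat (and $\delta=o(1)$), and indeed $1/\delta=O(\delta^2 n)$ follows exactly from that; this is fine, just worth stating cleanly.

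Part (a) is where your plan diverges sharply from the paper, and where it has a genuine gap. You propose to prove Hamiltonicity by a P\'osa rotation--extension argument in $G$, closing rotated paths with random edges, and assert that ``tracking the logarithmic escape factor'' yields the threshold $p\gtrsim\log(1/\delta)/(\delta n^2)$. But this is precisely the style of argument used by Bohman, Frieze and Martin to prove Theorem~\ref{th::BFMalpha}, and it gives $p=\omega\bigl(\log(1/\delta)/(\delta^3 n^2)\bigr)$ -- a factor $\delta^{-2}$ worse than claimed. You offer no mechanism for saving those two powers of $\delta$, and rotation alone does not obviously provide one: the losses come from the number of closing steps, not just the booster size. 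The paper's improvement is structural, not a sharper analysis of rotation. It partitions $V(G)$ into $O(1/\delta)$ pieces of size $\Theta(\delta n)$, each inducing a $\Theta(\delta^2 n)$-connected subgraph (Lemma~\ref{lem::highConnectivity}); it then forms an auxiliary \emph{directed} graph on the pieces, with an arc whenever $R$ supplies a single edge between designated halves, and invokes the directed Hamiltonicity threshold (Theorem~\ref{th::McDiarmid}) to obtain a cyclic ordering of the pieces; finally it uses the new spanning linkage theorem (Theorem~\ref{lem::disjointPaths}) to route vertex-disjoint paths through each piece that realize the prescribed endpoints \emph{and} absorb all remaining vertices, stitching everything into one Hamilton cycle. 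The count $p\cdot(\delta n)^2\gtrsim\delta\log(1/\delta)$ needed for the auxiliary digraph is what produces the announced threshold; a rotation argument does not reproduce this accounting.

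Your plan for pancyclicity is also substantially heavier than needed and is a second gap. You propose to re-run the Hamiltonicity argument on induced subgraphs for long $\ell$ and to extract intermediate-length cycles from the density of $G$ via ``Bondy-type extension,'' and you explicitly flag the intermediate regime as the hard part. The paper avoids all of this: once $G\cup R$ is Hamiltonian, the hypothesis $\alpha(G)\le c_2\delta^2 n\le\delta n/600\le\delta(G\cup R)/600$ (for small enough $c_2$) lets one invoke Theorem~\ref{th::deltaVsAlpha} (Keevash--Sudakov) as a black box, giving pancyclicity in one line. You should use this instead; the bespoke intermediate-length argument you sketch is neither specified nor clearly feasible within the stated hypotheses.
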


Parts (b) and (c) of the statement of Theorem~\ref{th::smallAlpha} show that, given the assumptions $\alpha(G) = O(\delta^2 n)$ and $\delta = \Omega(n^{- 1/3})$, the lower bounds on $p$ stated in Part (a) are essentially best possible. Moreover, given that $\alpha(G) = O(\delta^2 n)$, the assumption $\delta = \Omega(n^{- 1/3})$ is also essentially best possible. Indeed, if $\Delta(G) = O(\delta(G)) = O(\delta n)$, then $\alpha(G) \geq \frac{n}{\Delta(G)+1} = \Omega(1/\delta)$ and thus the assumption $\alpha(G) = O(\delta^2 n)$ implies that $\delta = \Omega(n^{- 1/3})$. In other words, if $\delta = o(n^{- 1/3})$ and $\alpha(G) = O(\delta^2 n)$, then the degree of $(1 - o(1)) n$ of the vertices of $G$ is $\omega(\delta n)$. Nevertheless, considering smaller values of $\delta$ is a worthwhile endeavour. Our next result applies to graphs whose minimum degree is as low as $\Omega \left((\log n)^2 \right)$. 

\begin{main} \label{th::smallDelta}
There exist positive constants $c_1, c_2$ and $c_3$ such that the following hold.
\begin{description}
\item [(a)] Let $G = (V,E)$ be an $n$-vertex graph satisfying $\alpha(G) \delta(G) = O(n)$ and $\delta(G) > c_1 (\alpha(G) \log n)^2$. Then, $G \cup \mathbb{G}(n,p)$ is a.a.s. pancyclic, whenever $p := p(n) \geq \frac{c_2 \log n \log(\alpha(G) \log n)}{n \delta(G)}$.

\item [(b)] For all integers $d := d(n) \geq 1$ and $k := k(n) = \omega_n(1)$ satisfying $k (d+1) \leq n$, there exists an $n$-vertex graph $H$ satisfying $\alpha(H) = k$ and $\delta(H) = d$ such that a.a.s. $H \cup {\mathbb G}(n,p)$ is not Hamiltonian, whenever $p := p(n) \leq \frac{c_3 \log k}{d n}$.
\end{description}
\end{main}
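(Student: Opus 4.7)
The plan is as follows. For part (b), the natural construction takes $H$ to be a vertex-disjoint union of $k$ cliques: $k-1$ cliques $C_2,\dots,C_k$ of size exactly $d+1$, together with one clique $C_1$ of size $n-(k-1)(d+1) \ge d+1$ (using $k(d+1)\le n$). This yields $\alpha(H) = k$ and $\delta(H) = d$. For $H \cup R$ to be Hamiltonian it must be connected, so each $C_i$ must have at least one $R$-edge to its complement. Let $B_i$ be the event that $C_i$ has no such edge, for $i \ge 2$. Then $\mathbb{P}[B_i] = (1-p)^{(d+1)(n-d-1)} \ge k^{-O(c_3)}$ when $p \le c_3 \log k/(dn)$, since $(d+1)(n-d-1)p = O(c_3 \log k)$. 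Writing $X = \sum_{i=2}^k \mathbf{1}_{B_i}$, we get $\mathbb{E}[X] \ge k^{1-O(c_3)} \to \infty$ for sufficiently small $c_3$. Covariances are controlled by noting that $B_i$ and $B_j$ share only the $(d+1)^2$ potential $R$-edges between $C_i$ and $C_j$; hence $\mathbb{P}[B_i \cap B_j] \le \mathbb{P}[B_i]\mathbb{P}[B_j] \cdot (1-p)^{-(d+1)^2} = (1+o(1))\mathbb{P}[B_i]\mathbb{P}[B_j]$, using $(d+1)^2 p \le (d+1)\log k/n = o(1)$ from $k(d+1)\le n$ and $k = \omega(1)$. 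Chebyshev's inequality then forces $X \ge 1$ a.a.s., so $H \cup R$ is disconnected, hence non-Hamiltonian.

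For part (a), the plan proceeds in three stages. First, I would establish a Hamilton cycle in $G \cup R$ by a P\'osa rotation-extension argument: the hypothesis $\delta(G) \ge c_1(\alpha(G)\log n)^2$, combined with the independence bound, yields expansion of small vertex sets in $G$ that is strong enough for rotations from a longest path to generate many endpoints, while a suitable sprinkling of random edges closes the path into a cycle. Second, for short cycle-lengths $3 \le \ell \le \ell_0$ (for a suitable threshold $\ell_0$), I would locate cycles directly inside $G$ via a breadth-first search from a vertex of high degree, exploiting the minimum-degree assumption and the bound on $\alpha(G)$ to guarantee back-edges at every small length. Third, I would realise cycles of all intermediate and long lengths by treating the Hamilton cycle as a backbone and using both $G$-chords and random chords: for each $\ell \in [\ell_0,n]$, find a chord of appropriate ``skip'' along the Hamilton cycle whose presence yields a cycle of length $\ell$.

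The main obstacle I anticipate is this third stage. A naive chord-per-skip union bound would demand $np \gtrsim \log n$, i.e., $p \gtrsim \log n/n$, which is far more than the allowance $p \le c_2 \log n \log(\alpha \log n)/(n\delta)$ when $\delta$ is large. So the proof must exploit $G$ non-trivially: the original graph contributes $\Omega(n\delta)$ edges, hence $\Omega(\delta)$ chords per skip on average, so most skip-values are already covered by $G$-chords and only the ``rare'' uncovered skip-values need a random chord. Making this precise, and in particular explaining the factor $\log(\alpha \log n)$ rather than $\log\log n$ or $\log\alpha$, is likely the heart of the argument: I would guess that this factor reflects the cost of bounding the number of uncovered skip-values using the independence-number condition. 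I would expose $R$ in a constant number of independent sprinklings, allocating one each to rotation, path-closure, and chord-coverage, and verify that the union of these sprinklings remains within the claimed bound on $p$.
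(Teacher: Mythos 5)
Your plan for part~(b) is essentially the paper's own proof: a disjoint union of $k$ cliques of size about $d+1$, with a second-moment argument showing that a.a.s.\ some clique has no random edge to its complement, forcing disconnection. You and the paper use the same construction and the same second-moment method.

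For part~(a), your route diverges substantially from the paper's, and I do not think it survives scrutiny. The core problem is your first stage: a P\'osa rotation--extension argument requires expansion of the form $|N(S)| \geq 2|S|$ for sets up to linear size, but a graph satisfying only $\alpha(G)\delta(G) = O(n)$ and $\delta(G) \geq c_1(\alpha(G)\log n)^2$ can be a disjoint union of cliques of size roughly $\delta(G)+1$; in that case $|N(S)| \leq \delta(G)$ for any $S$ lying inside one clique, so $2$-expansion already fails at $|S| = \delta(G)/2$, which is far below linear when $\delta(G)$ is polylogarithmic. The random edges you add (at density $p \approx \log n \log(\alpha\log n)/(n\delta)$) are far below the $1/n$ connectivity threshold and will not repair the expansion. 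Bohman--Frieze--Martin's rotation-based argument does apply in a similar spirit, but they crucially assume $\alpha(G) = O(\delta(G)^2/n)$, a much stronger dichotomy than the one here; in the present regime the deterministic graph simply does not expand, and the paper resolves this by a different decomposition: apply Lemma~\ref{lem::partition} to partition $V$ into $t \le 19\alpha(G)\log n$ highly connected blocks of size at least $\delta(G)/\log n$, chop the large blocks into chunks of size roughly $n/(\alpha(G)\log n)$, first expose a random bipartite graph and use Theorem~\ref{th::perfectMatchingBnnp} to pair off the small blocks with chunks, then build an auxiliary random digraph on the chunks, find a Hamilton cycle in it via Theorem~\ref{th::McDiarmid}, and realize the cycle inside $G\cup R$ by applying the spanning-linkage result Theorem~\ref{lem::disjointPaths} within each large block and Chv\'atal--Erd\H{o}s (Theorem~\ref{th::HamConCE}) within each small one. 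The mysterious factor $\log(\alpha(G)\log n)$ is exactly the $\log q'$ term coming from the matching and digraph thresholds, where $q'$ and $\ell$ are both of order $\alpha(G)\log n$; it has nothing to do with chord coverage.

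Your third stage is also a false obstacle. The paper derives pancyclicity in one line from Hamiltonicity: the hypothesis $\delta(G) > c_1(\alpha(G)\log n)^2$ forces $\alpha(G\cup R) \leq \alpha(G) = o(\delta(G\cup R))$, so Theorem~\ref{th::deltaVsAlpha} (Keevash--Sudakov: a Hamiltonian graph with $\delta \geq 600\alpha$ is pancyclic) applies immediately. No chord-per-skip accounting is needed. So even if your rotation-based Hamiltonicity step could be made to work, the elaborate stage-three argument you sketch is unnecessary given the hypotheses, and the difficulty you single out is not where the real work lies.
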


The lower and upper bounds on $p$, stated in parts (a) and (b) of Theorem~\ref{th::smallDelta}, are only a polylogarithmic factor apart. Ignoring polylogarithmic factors, Theorems~\ref{th::smallAlpha}(a) and~\ref{th::smallDelta}(a) together cover the entire range of possible values for the minimum degree of the seed graph $G$. At their ``meeting point'', that is, when $\delta(G) = \tilde{\Theta}(n^{2/3})$ and $\alpha(G) = \tilde{\Theta}(n^{1/3})$, both theorems provide very similar bounds on $p$. 

\bigskip

The assumption $\alpha(G) = O(\delta^2 n)$ appearing in the statement of Theorem~\ref{th::smallAlpha} is essentially the same as the one made in Theorem~\ref{th::BFMalpha}. However, it is not clear that such a restriction on $\alpha(G)$ is required. In fact, the following result suggests that this assumption can be significantly mitigated, provided that Chv\'atal's \emph{toughness conjecture} is true. A graph $G$ is said to be \emph{$t$-tough} if for every $S \subseteq V(G)$ the number of connected components of $G \setminus S$ is at most $\max \{1, |S|/t\}$. Chv\'atal observed that any Hamiltonian graph is 1-tough and, subsequently, made the following intriguing conjecture.

\begin{conjecture} [\cite{Chvatal}] \label{conj::toughness}
There exists a constant $t \geq 1$ such that any $t$-tough graph is Hamiltonian.
\end{conjecture}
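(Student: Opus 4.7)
The statement in question is Chv\'atal's toughness conjecture, which has remained open since 1973 and is widely regarded as one of the central unresolved problems in Hamiltonian graph theory. Accordingly, any genuine ``proof plan'' is necessarily speculative; what I would attempt is the approach that every serious attack on the conjecture has adopted. Fix a sufficiently large constant $t$ --- the current consensus, shaped by the $(9/4-\varepsilon)$-tough non-Hamiltonian constructions of Bauer, Broersma and Veldman, is that $t=9/4$ should suffice --- assume towards contradiction that $G$ is $t$-tough but non-Hamiltonian, and try to extract structure strong enough to either force a Hamilton cycle or violate $t$-toughness.

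The natural first step is to take a longest cycle $C$ in $G$ together with a vertex $v \notin V(C)$ in a component $K$ of $G\setminus V(C)$, and then run the standard Chv\'atal--Erd\H{o}s rotation-and-extension machinery. If $u\in N_C(v)$ with successor $u^+$ on $C$, then $u^+$ is non-adjacent to every vertex of $N_C(v)$, and iterating such exchanges produces large independent transversals on $C$ and large sets of ``insertion-forbidden'' vertices. The hope is to assemble from these a cutset $S\subseteq V(G)$ whose removal leaves strictly more than $|S|/t$ components --- one per attached piece of $K$ plus the fragments induced on $C$ by the forbidden positions --- thereby contradicting the toughness assumption. For several restricted classes (chordal, planar, split, $P_4$-free, spider-free, and others) exactly this plan has been implemented with explicit constants $t$, which is the main source of evidence in favour of the conjecture.

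The hard part --- and the reason the conjecture has resisted nearly five decades of effort --- is that toughness is a purely \emph{global} hypothesis: it does not, on its own, produce small separators, forbidden induced subgraphs, or local codegree conditions of the sort on which rotation-extension arguments rely. There exist highly tough graphs whose every small vertex-cut splits off only a single component and whose local neighbourhoods are essentially featureless, so the counting outlined above stalls at the final step of converting $N_C(v)$-type independent sets into many components of $G\setminus S$. A genuine proof would presumably require a new Hamiltonicity criterion that converts a global toughness/connectivity guarantee into a local certificate amenable to cycle-building; absent such a tool, the plan above cannot be expected to close the gap. This is precisely why the present paper states the conjecture as a hypothesis and uses it as a black box in the subsequent result, rather than proving it.
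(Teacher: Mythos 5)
You correctly recognize that this is Chv\'atal's toughness conjecture, which remains open and is not proved anywhere in the paper; the authors merely cite it from~\cite{Chvatal} and use it as a conditional hypothesis in Theorem~\ref{th::toughHam}(a). Your response is appropriate: there is no proof in the paper to compare against, and your outline correctly identifies both the standard rotation--extension attack and the fundamental obstruction (toughness being a global condition that does not localize), matching the paper's own treatment of the conjecture as a black box.
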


Despite considerable effort (see, e.g.,~\cite{BBS} and the many references therein), Conjecture~\ref{conj::toughness} is still open. Our next result reads as follows.

\begin{main} \label{th::toughHam}
\begin{description}
\item [(a)] Suppose that Conjecture~\ref{conj::toughness} holds for some constant $t_0$. Then there exist positive constants $c_1 = c_1(t_0)$ and $c_2 = c_2(t_0)$ such that the following holds. Let $G$ be an $n$-vertex graph with minimum degree $k$. If $\alpha(G) \leq c_2 k$, then $G \cup \mathbb{G}(n,p)$ is a.a.s. pancyclic whenever $p \geq \frac{c_1 \log n}{n k}$.

\item [(b)] For every sufficiently large integer $n$, constant $0 < \varepsilon < 1/2$, integer $n^{\varepsilon} \leq k \leq n^{1 - \varepsilon}$, and constant $0 < c < 1$, there exist a constant $c' > 0$ and an $n$-vertex graph $H$ satisfying $\delta(H) = k$ and $\alpha(H) \leq c k$ such that $H \cup \mathbb{G}(n,p)$ is a.a.s. not Hamiltonian, whenever $p := p(n) \leq \frac{c' \log n}{n k}$.

\item [(c)] For every sufficiently large integer $n$ and every integer $\omega_n(1) = k := k(n) \leq n/2$, there exists an $n$-vertex graph $H$ satisfying $\alpha(H) = k$ and $\delta(H) \geq k/2$ such that $H \cup \mathbb{G}(n,p)$ is a.a.s. not Hamiltonian, whenever $p := p(n) \leq \frac{1}{3 n}$.
\end{description}
\end{main}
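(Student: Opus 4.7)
My plan for Part (a) is to show that $G \cup \mathbb{G}(n,p)$ is a.a.s. $t_0$-tough, invoke Conjecture~\ref{conj::toughness} to obtain Hamiltonicity, and then upgrade to pancyclicity. For $t_0$-toughness, I would fix any $S \subseteq V(G)$ and bound the number of components of $(G \cup R) \setminus S$ by $\max(1, |S|/t_0)$. When $|S| \geq c_2 k t_0$, the bound follows immediately: a transversal of the components is independent in $G$, so their number is at most $\alpha(G) \leq c_2 k \leq |S|/t_0$, and no randomness is needed. When $|S| < c_2 k t_0$, I would exploit the minimum degree assumption to show that every non-singleton component of $G \setminus S$ has size at least $k - |S| + 1$ (when $|S| < k$), while the singleton components, being pairwise non-adjacent in $G$, number at most $\alpha(G) \leq c_2 k$. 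Random edges are then used to merge these components and absorb singletons: each vertex contributes expected random degree $pn = \Theta(\log n / k)$, and the $\log n$ factor in $p$ supplies the exponential slack needed for a union bound over all relevant $S$. Once Hamiltonicity is secured, I would obtain cycles of all shorter lengths by a chord-rotation argument along the Hamilton cycle, drawing on the abundance of chord edges from $G$ (which has at least $nk/2$ edges) supplemented by the random edges.

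For Part (b), I would construct $H$ as a non-$1$-tough graph robust to sparse random perturbations: let $T$ be a clique of size approximately $k-1$ and let $C_1, \ldots, C_m$ with $m > |T|$ be vertex-disjoint dense graphs, each completely joined to $T$, with sizes chosen so that $\delta(H) = k$ and $\alpha(H) \leq ck$. The constraints $n^{\varepsilon} \leq k \leq n^{1-\varepsilon}$ leave room for this (clique clusters when $k \geq \sqrt{n}$, appropriate denser regular subgraphs otherwise). Removing $T$ leaves $m > |T|$ components, and a second-moment / Poisson-approximation argument shows that with $p \leq c' \log n / (nk)$, the random edges a.a.s. fail to merge enough cluster-pairs to restore $1$-toughness of $(H \cup R) \setminus T$, so $H \cup R$ is a.a.s. not Hamiltonian. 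Part (c) uses the same template with $p \leq 1/(3n)$: here $R$ has fewer than $n/6$ edges in expectation, and a direct first-moment argument applied to an analogous bottleneck construction — designed so that $\alpha(H) = k$ exactly and $\delta(H) \geq k/2$ — shows that a.a.s. no random edge spans the relevant cut.

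The main obstacle will be Part (a)'s toughness analysis in the regime $|S| = \Theta(k)$, between $t_0$ and $c_2 k t_0$. Here neither the $\alpha$-bound nor the minimum-degree component-size bound alone suffices, and one must classify components of $G \setminus S$ by size, control the number of small ones via $\alpha(G)$, and invoke the random edges to simultaneously merge the large components and absorb singletons — all with failure probability small enough to survive a union bound over all $S$ of size up to $c_2 k t_0$, a quantity of subsets growing like $\exp(O(k \log n))$. A secondary challenge is the pancyclicity upgrade for cycle lengths very close to $n$, where chord-rotation along the Hamilton cycle must be iterated carefully; here I expect to rely on the density of $G$ to supply a sufficient reservoir of chords at each step, and possibly to produce several distinct near-Hamilton structures to cover the remaining lengths.
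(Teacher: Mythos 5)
Your overall plan for (a) --- establish $t_0$-toughness, invoke Chv\'atal's conjecture, then upgrade to pancyclicity --- matches the paper's route, but the union-bound strategy has a genuine gap. You observe that for $|S| < k$ every component of $G\setminus S$ has at least $k - |S| + 1$ vertices, yet you use this only to argue that random edges can merge them, proposing a union bound over all $S$ with $|S| < c_2 k t_0$, i.e.\ over $\exp(O(k\log n))$ sets. The failure probability per fixed $S$ (say, a balanced cut of $(G\cup R)\setminus S$ missed by $R$) is about $\exp(-p\,\Theta(n^2)) = \exp(-\Theta(n\log n/k))$, which beats $\exp(O(k\log n))$ only when $k = O(\sqrt{n})$. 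The step that makes the paper's argument close for \emph{all} $k$ is to push the component-size bound one step further: for $|S| \leq k/2$ one has $\delta(G\setminus S) \geq k/2$, hence $G\setminus S$ has at most $2n/k$ components, so $t_0$-toughness holds \emph{deterministically} once $|S| \geq 2nt_0/k$. This caps the union bound at $s \leq 2nt_0/k$, matching the available probability budget; without it the plan fails in the regime $k \gg \sqrt{n}$. Secondarily, chord rotation is unnecessary for the pancyclicity upgrade: since $\alpha(G\cup R) \leq c_2 k \leq \delta(G\cup R)/600$ for $c_2 \leq 1/600$, Theorem~\ref{th::deltaVsAlpha} converts Hamiltonicity to pancyclicity directly.

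For (b) the paper uses a much simpler construction: $H$ is a disjoint union of roughly $\min\{\lfloor ck\rfloor, \lfloor n/(k+1)\rfloor\}$ cliques of order $k+1$ (plus one remainder clique), so $\delta(H) = k$ and $\alpha(H) \leq ck$ are immediate, and a second-moment computation shows some clique a.a.s.\ stays isolated, hence $H\cup R$ is a.a.s.\ disconnected. Your hub-plus-clusters design has a real balancing problem: if the clusters are cliques fully joined to a hub of size $\approx k-1$, forcing a cluster vertex to have degree exactly $k$ makes each cluster a $K_2$, and then one vertex per cluster yields an independent set of size $\Theta(n)$, far above $ck$; fixing this requires denser non-clique clusters whose minimum degree is delicate to control, a complication the paper's disconnected construction avoids entirely. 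For (c) your sketch is too vague to verify; the paper takes $V = I\cup A\cup B$ with $|I| = k-1$ independent and complete to $A$ of size $\lceil k/2\rceil$, and $A\cup B$ a clique, so that deleting $A$ leaves $k$ components, and with $p \leq 1/(3n)$ a first-moment computation shows fewer than $\lfloor k/2\rfloor$ of the $I$-vertices receive a random edge into $I\cup B$, so $H\cup R$ is a.a.s.\ not $1$-tough.
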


Our next result asserts that the assumptions on $\alpha(G)$ and $\delta(G)$ appearing in our previous theorems can be mitigated further (even without assuming the correctness of Conjecture~\ref{conj::toughness}) if one replaces Hamiltonicity with the existence of a nearly spanning cycle (naturally, the bound on $p$ can then be reduced as well).
 
\begin{main} \label{th::longCycle}
\begin{description}
\item [(a)] For every $\varepsilon > 0$ there exist positive constants $c_1, c_2$ and $c_3$ such that the following holds. Let $G$ be an $n$-vertex graph satisfying $\alpha(G) \leq c_1 n$. If $p := p(n) \geq \max \left\{\omega \left(n^{-2} \right),  \frac{c_2 \alpha(G)}{n^2} \right\}$, then a.a.s. $G \cup {\mathbb G}(n,p)$ contains a cycle $C$ of length at least $(1 - \varepsilon) n$. If, additionally, $\alpha(G) \leq c_3 \sqrt{n}$, then a.a.s. $(G \cup {\mathbb G}(n,p))[C]$ is pancyclic.

\item [(b)] For every constant $0 < c \leq 1$ and all integers $c^{-1} < m \leq n$, there exists an $n$-vertex graph $H$ satisfying $\alpha(H) = m$ such that for every $n$-vertex graph $R$ having fewer than $c m/2$ edges, the circumference of $H \cup R$ is less than $c n$.
\end{description}
\end{main}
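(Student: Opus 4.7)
Let $H$ be the vertex-disjoint union of $m$ cliques of sizes $\lfloor n/m \rfloor$ or $\lceil n/m \rceil$, so $\alpha(H)=m$. For any cycle $C \subseteq H \cup R$ of length at least $cn$, the restriction of $C$ to each clique $K_i$ is a disjoint union of $p_i \geq 0$ paths, and a standard incidence argument shows that the number of $R$-edges of $C$ with endpoints in distinct cliques equals $\sum_i p_i$. Since each clique has at most $\lceil n/m \rceil \leq 2n/m$ vertices (using $m \leq n$), the cycle $C$ visits at least $cn/\lceil n/m \rceil \geq cm/2$ distinct cliques, whence $|E(R)| \geq cm/2$. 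The hypothesis $m > c^{-1}$ rules out cycles of length $\geq cn$ that lie within a single clique, as then $\lceil n/m \rceil < cn$.

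\textbf{Part (a).} Split the random edges into two independent batches $R_1, R_2 \sim \mathbb{G}(n, p/2)$ via the standard coupling. I will construct the long cycle in Phase~1 and then upgrade to pancyclicity in Phase~2.

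\emph{Phase 1 (long cycle).} Applying the Gallai--Milgram theorem to $G$, I obtain a vertex-disjoint path cover $P_1, \ldots, P_t$ of $G$ with $t \leq \alpha(G)$. Discarding paths of combined length at most $\varepsilon n/4$, I retain a subcollection $Q_1, \ldots, Q_s$ (with $s \leq \alpha(G)$) spanning at least $(1-\varepsilon/4)n$ vertices, where each $Q_i$ has length at least $n/(4\alpha(G))$. Within each $Q_i$, $G$-edges allow P\'osa-type rotations that expose $\Omega(|Q_i|)$ alternative endpoints. The $\Theta(\alpha(G))$ edges of $R_1$ then a.a.s.\ link these flexible endpoint sets across paths to form a single long path, which is closed into a cycle of length at least $(1-\varepsilon)n$ using the reserve $R_2$ and further rotations.

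\emph{Phase 2 (pancyclicity on the cycle).} Under the stronger hypothesis $\alpha(G) \leq c_3 \sqrt n$, the cycle $C$ from Phase~1 spans $n' \geq (1-\varepsilon)n$ vertices, and the graph $(G \cup \mathbb{G}(n,p))[V(C)]$ has independence number at most $c_3 \sqrt n \leq \sqrt{n'}/3$ (choosing $c_3$ small enough). Since this graph is Hamiltonian (with the Hamilton cycle $C$), a classical pancyclicity theorem in the spirit of Erd\H{o}s, asserting that every Hamiltonian graph on $n'$ vertices with independence number sufficiently small relative to $\sqrt{n'}$ is pancyclic, yields the conclusion.

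\textbf{Main obstacle.} The technical crux is the merging step in Phase~1. Classical P\'osa rotation arguments are wielded in settings endowed with a global minimum-degree condition; here $G$ is only assumed to have small independence number, so individual paths $Q_i$ may exhibit limited rotation flexibility (particularly the short ones). Ensuring that $\Theta(\alpha(G))$ random edges of $R_1$ a.a.s.\ merge all $\alpha(G)$ paths into a single cycle, while preserving total length at least $(1-\varepsilon)n$, requires a careful amortized analysis that tracks rotation reservoirs across all paths simultaneously.
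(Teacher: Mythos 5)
\textbf{Part (b)} is correct and follows the same construction as the paper ($m$ nearly-equal cliques); your incidence-counting argument is a slightly more detailed version of the paper's one-line ``it is evident that\dots''.

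\textbf{Part (a)} has a genuine gap, which you yourself flag as the ``main obstacle''. Let me be concrete about why the gap is fatal rather than a technicality. Gallai--Milgram gives a path cover with at most $\alpha(G)$ paths, but says nothing about the \emph{internal structure} of each path $Q_i$. A P\'osa rotation performed at an endpoint $v$ of $Q_i$ needs $v$ to have many $G$-neighbours on $Q_i$ itself, and the standard rotation--extension machinery then further needs the graph induced on $V(Q_i)$ to be a good expander so that the set of reachable endpoints keeps growing. The hypothesis ``$\alpha(G) \leq c_1 n$'' gives you neither: within a single path $Q_i$ of length $\approx \varepsilon n/(4\alpha(G))$, the graph $G[V(Q_i)]$ could have an isolated-looking endpoint (low degree into $Q_i$), and nothing forces any expansion. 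So ``$G$-edges allow P\'osa-type rotations that expose $\Omega(|Q_i|)$ alternative endpoints'' is simply unsupported, and the merging step of Phase~1 does not go through. You also do not handle the boundary case $\alpha(G) = O(1)$, where the auxiliary-linking probability computation degenerates.

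The paper circumvents rotations entirely. Instead of taking a path cover of arbitrary shape, it greedily extracts $r = \Theta(\alpha(G)/\varepsilon)$ pairwise disjoint paths \emph{of equal length} $\ell - 1$ with $\ell = \varepsilon n/(4\alpha(G))$; the existence of each such path follows from a degeneracy/Tur\'an-type argument (repeatedly delete low-degree vertices, and observe that $\alpha$ bounds how many vertices can be removed). It then designates the first and last $\varepsilon\ell/6$ vertices of each path as large ``connection zones'' $X_i, Y_i$. Because these zones have size $\Theta(\varepsilon^2 n/\alpha(G))$, a single edge of $\mathbb{G}(n,p)$ between $Y_i$ and $X_j$ appears with probability $\Omega(1/r)$ as soon as $p = \Omega(\alpha(G)/n^2)$. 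The auxiliary \emph{directed} graph on $r$ vertices whose arcs record such linking edges is then dense enough to contain a directed cycle on $(1-\varepsilon/3)r$ vertices (via Theorem~\ref{th::DirectedDFS}/Corollary~\ref{cor::AlmostSpanningCycleDnp}), and stitching the surviving middle portions of the paths along this cycle yields a cycle of length $\geq (1-\varepsilon)n$ with no rotations at all. The $\alpha(G) = O(1)$ case is handled separately by deleting low-degree vertices and invoking Theorem~\ref{th::smallAlpha}. Your Phase~2 (invoking an Erd\H{o}s-type ``small independence number $\Rightarrow$ pancyclic'' theorem for Hamiltonian graphs) matches the paper's use of Theorem~\ref{th::AlphaPancyclic} and is fine. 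If you want to salvage your Phase~1, the key missing idea is: equalise path lengths and use large prefix/suffix connection zones, rather than trying to manufacture rotation reservoirs out of a hypothesis that cannot supply them.
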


The rest of this paper is organised as follows. In Section~\ref{sec::prelim} we collect several results that facilitate our proofs. In Section~\ref{sec::smallAlphaPan} we prove Theorems~\ref{th::smallAlpha} and~\ref{th::smallDelta}. Theorems~\ref{th::toughHam} and~\ref{th::longCycle} are proved in Sections~\ref{sec::toughness} and~\ref{sec::cycles}, respectively. Finally, in Section~\ref{sec::concluding}, we discuss possible directions for future research.

\section{Preliminaries} \label{sec::prelim}

In this section we collect several known results, as well as a few new ones, that will be useful in proving our main theorems. In particular, we state and prove a sufficient condition for a graph to admit a spanning system of pairwise disjoint paths whose endpoints have been predetermined (see Theorem~\ref{lem::disjointPaths} in Section~\ref{subsec::linkage}); this result may be of independent interest. Since there are quite a few such results and they vary in nature, this section is divided into several subsections.

\subsection{Matchings and cycles in random graphs and digraphs} \label{subsec::randomGraphs}

Let $\mathbb{D}(n,p)$ denote the probability space of random directed graphs, that is, $D \sim \mathbb{D}(n,p)$ is a digraph with vertex-set $[n] := \{1, \ldots, n\}$ such that, for every $1 \leq i \neq j \leq n$, the \emph{ordered} pair $(i,j)$ is an arc of $D$ with probability $p := p(n)$, independently of all other pairs. The threshold for Hamiltonicity in $\mathbb{D}(n,p)$ is known; the following is sufficient for our purposes.
\begin{theorem} [\cite{Frieze} -- abridged] \label{th::McDiarmid}
If $p \geq (\log n + \omega(1))/n$, then a.a.s. $\mathbb{D}(n,p)$ contains a directed Hamilton cycle.
\end{theorem}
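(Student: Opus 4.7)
\emph{Proof plan.} The plan is to combine the classical bipartite-matching threshold with a cycle-merging argument via sprinkling. Write $p = p_1 + p_2 - p_1 p_2$ with $p_1 = (\log n + g(n))/n$, where $g(n) \to \infty$ is chosen to grow so slowly that we still have $p_2 n \to \infty$. It suffices to show that $D_1 \cup D_2$ contains a directed Hamilton cycle a.a.s., where $D_1 \sim \mathbb{D}(n,p_1)$ and $D_2 \sim \mathbb{D}(n,p_2)$ are independent, since $D_1 \cup D_2 \preceq \mathbb{D}(n,p)$.

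For the first phase, associate $D_1$ with the bipartite random graph $B \sim \mathbb{G}(n,n,p_1)$ obtained by placing an edge $x_i y_j$ precisely when $(i,j) \in E(D_1)$. A perfect matching in $B$ corresponds to a \emph{$1$-factor} of $D_1$, i.e., a spanning subdigraph in which every vertex has in-degree and out-degree exactly $1$ (equivalently, a spanning vertex-disjoint union of directed cycles). By the classical Erd\H{o}s--R\'enyi--Bollob\'as threshold for perfect matchings in $\mathbb{G}(n,n,p)$, our choice $p_1 = (\log n + g(n))/n$ ensures $B$ a.a.s. admits a perfect matching, giving a $1$-factor $F$ of $D_1$ a.a.s.

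For the second phase, let $\sigma : [n] \to [n]$ be the permutation corresponding to $F$ (so $(v, \sigma(v)) \in F$ for every $v$) and let $C_1, \ldots, C_k$ be its cycles. The basic \emph{switching operation} is the following: for $u \in V(C_i)$ and $v \in V(C_j)$ with $i \neq j$, if both $(u, \sigma(v))$ and $(v, \sigma(u))$ lie in $D_2$, then replacing $\{(u, \sigma(u)), (v, \sigma(v))\}$ inside $F$ by $\{(u, \sigma(v)), (v, \sigma(u))\}$ yields a new $1$-factor in which $C_i$ and $C_j$ are merged into a single cycle while all other $C_\ell$ are preserved. Consequently, $k - 1$ successful switches, performed greedily, turn $F$ into a Hamilton cycle inside $F \cup D_2$.

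The hard part will be establishing that all $k-1$ merges succeed a.a.s. The number of cycles of $F$ is typically $\Theta(\log n)$, so only polylogarithmically many switches are needed, and the switches corresponding to disjoint pairs $(u,v)$ use disjoint arcs of $D_2$, hence provide independent Bernoulli$(p_2^2)$ signals. The union bound over bipartitions of the cycle set of $F$, combined with concentration over a family of disjoint candidate switching pairs crossing each bipartition, goes through as long as no cycle of $F$ is too short. This final difficulty is handled by reserving a tiny slice of $D_2$ as an additional sprinkle that first eliminates short cycles of $F$ (or, alternatively, by replacing the $2$-for-$2$ switch with a richer $t$-for-$t$ switch that admits many more candidate configurations), after which the concentration argument succeeds under the hypothesis $p_2 n \to \infty$.
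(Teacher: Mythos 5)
This theorem is not proved in the paper; it is simply cited to Frieze's 1988 algorithmic paper on Hamilton cycles in random digraphs (the label ``McDiarmid'' alludes to the fact that the directed-versus-undirected Hamiltonicity comparison goes back to McDiarmid's coupling argument). So there is no ``paper's own proof'' to match; what follows is a review of your attempt on its own merits.

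Your two-round set-up and the first phase are fine: decomposing $p$ into $p_1 + p_2 - p_1 p_2$ with $p_1 = (\log n + g(n))/n$, $g(n) \to \infty$ and $p_2 n \to \infty$ is possible whenever $p \geq (\log n + \omega(1))/n$, and the Erd\H{o}s--R\'enyi bipartite matching threshold does give a $1$-factor $F$ of $D_1$ a.a.s. The genuine gap is exactly the step you flag yourself: merging the cycles of $F$ using $D_2$. The simple $2$-for-$2$ switch provably fails when $F$ has a short cycle. If $C_i$ has constant length $\ell$, the number of candidate switches incident to $C_i$ is $\ell(n-\ell) = \Theta(n)$, each succeeding with probability $p_2^2$; since $p_2 = \Theta(\omega(1)/n)$ with $\omega(1)$ growing arbitrarily slowly, the expected number of successful switches is $\Theta(n p_2^2) = \Theta(\omega(1)^2/n) \to 0$, so with probability $1 - o(1)$ none of them occur and the greedy merge stalls. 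Thus the issue is not merely ``the argument needs care near short cycles''; the switching scheme as described simply does not work at the claimed threshold, and the two remedies you mention are not fleshed out. A sprinkle ``to eliminate short cycles'' runs into precisely the same $n p_2^2 \to 0$ obstruction, and a ``richer $t$-for-$t$ switch'' is in effect a Euclid-like euphemism for the directed P\'osa rotation--extension machinery (long alternating paths, expansion of endpoint sets, boosters), which is the actual content of Frieze's proof and is a substantially larger argument than a union bound over disjoint switching pairs. You should also verify that the $1$-factor coming from a bipartite matching at threshold does in fact contain short cycles with non-negligible probability (it does; the permutation it induces behaves roughly like a random permutation as far as small cycle counts are concerned), so the difficulty is unavoidable rather than cosmetic.

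If you want an elementary route that avoids rotations entirely, the cleanest is McDiarmid's coupling: one shows directly that $\Pr[\mathbb{D}(n,p) \text{ is Hamiltonian}] \geq \Pr[\mathbb{G}(n,p) \text{ is Hamiltonian}]$ by replacing undirected edges by independent arc pairs one at a time and checking monotonicity at each step, and then invokes Koml\'os--Szemer\'edi/Bollob\'as for $\mathbb{G}(n,p)$. That argument sidesteps $1$-factors, switches, and short cycles altogether, and is much closer in spirit to what the citation is standing in for.
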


For the emergence of nearly spanning cycles, a smaller probability suffices.

\begin{theorem} [\cite{BKS}, see also~\cite{K}] \label{th::DirectedDFS}
Let $k < n$ be positive integers. Let $G = (V, E)$ be a directed graph on $n$ vertices, such that for any ordered pair of disjoint subsets $S, T \subseteq V$ of size 
$|S| = |T| = k$, $G$ has a directed edge from $S$ to $T$. Then $G$ admits a directed path of length $n - 2k + 1$ and a directed cycle of length at least $n - 4k + 4$.
\end{theorem}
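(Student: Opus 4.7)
The plan is to run a directed depth-first search (DFS) and read a long directed path off the stack; the long cycle will then follow from a single application of the hypothesis to the two ends of that path.

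I would maintain a partition $V = U \cup A \cup B$, initialised at $U = V$ and $A = B = \emptyset$. At each step, if $A$ is empty I move an arbitrary vertex from $U$ to $A$; otherwise I let $v$ be the top of $A$ and, if $v$ has an out-neighbour in $U$, push one such vertex to $A$, and if not, pop $v$ to $B$. Two properties are immediate: (i) $A$ always forms a directed path from bottom to top; (ii) at every moment no directed edge goes from $B$ to $U$, because a vertex is popped to $B$ precisely when it has no out-edges into the current $U$, and $U$ only shrinks thereafter.

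For the path, I would consider the first moment at which $|B| = k$; such a moment exists because $B = V$ at termination. At that moment $|U| \leq k - 1$, for otherwise taking $S := B$ and any $T \subseteq U$ of size $k$ would, by the hypothesis, yield an arc from $B$ to $U$, contradicting (ii). Hence $|A| \geq n - 2k + 1$ at this moment, so one step earlier (immediately before the pop that brought $|B|$ up to $k$) $|A| \geq n - 2k + 2$; by (i) this gives a directed path $v_1 \to v_2 \to \cdots \to v_m$ with $m \geq n - 2k + 2$.

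For the cycle, the claimed bound is vacuous unless $n \geq 4k - 2$; in that regime $m \geq 2k$, so the sets $T := \{v_{m-k+1}, \ldots, v_m\}$ and $S := \{v_1, \ldots, v_k\}$ are disjoint of size $k$ each. Applying the hypothesis to the ordered pair $(T, S)$ yields an arc $v_j \to v_i$ with $j \geq m - k + 1$ and $i \leq k$, and then $v_i \to v_{i+1} \to \cdots \to v_j \to v_i$ is a directed cycle of length $j - i + 1 \geq m - 2k + 2 \geq n - 4k + 4$.

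The only genuine subtlety, compared with the familiar undirected DFS argument, is that the invariant ``no edges to $U$'' holds only for $B$: the current top of $A$ may have out-edges into $U$ that have simply not been processed yet. This is why it is crucial to examine the state immediately \emph{before} the critical pop (so that the popped vertex is still counted inside $|A|$) rather than just after it; otherwise one loses the additive one and obtains only length $n - 2k$. The remainder of the argument is straightforward bookkeeping.
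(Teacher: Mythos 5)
The paper does not prove this theorem---it is imported as a black box from~\cite{BKS} (see also~\cite{K}), so there is no in-paper proof to compare against. Your directed-DFS argument is correct and is in fact the standard proof from those references: you run the three-set DFS bookkeeping $V=U\cup A\cup B$, use the invariant that no arc leaves $B$ into $U$ to force $|U|\le k-1$ once $|B|=k$, read off the path from $A$ one step before the critical pop (correctly harvesting the extra $+1$), and then close the path into a cycle by applying the hypothesis to its first and last $k$ vertices. The only imprecision is the throwaway remark that the cycle claim is ``vacuous unless $n\ge 4k-2$'': when $n-4k+4=1$ the conclusion still formally asserts the existence of some cycle, but in that range $n\ge 2k$, and the hypothesis applied to the top and bottom $k$ vertices of a topological order rules out acyclicity, so this edge case is harmless; it is worth a half-sentence if you want the writeup to be airtight.
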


\begin{corollary} \label{cor::AlmostSpanningCycleDnp}
For every $\varepsilon > 0$, there exists a constant $c > 0$ such that $\mathbb{D}(n,p)$ a.a.s. admits a directed cycle of length at least $(1 - \varepsilon) n$, whenever $p := p(n) \geq c/n$.
\end{corollary}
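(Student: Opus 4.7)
The plan is to derive the corollary as a direct consequence of Theorem~\ref{th::DirectedDFS} via a standard first-moment argument. Set $k := \lceil \varepsilon n / 4 \rceil$, so that $n - 4k + 4 \geq (1-\varepsilon) n$. In view of Theorem~\ref{th::DirectedDFS}, it suffices to show that a.a.s.\ $D \sim \mathbb{D}(n,p)$ satisfies the following property $\mathcal{P}$: for every ordered pair $(S,T)$ of disjoint subsets of $V(D)$ with $|S|=|T|=k$, there exists at least one arc of $D$ directed from $S$ to $T$.

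For a fixed ordered pair $(S,T)$ as above, the $k^2$ ordered pairs in $S \times T$ are each present in $D$ independently with probability $p$, so
\[
\Pr\bigl[\text{no arc from $S$ to $T$}\bigr] = (1-p)^{k^2} \leq e^{-p k^2}.
\]
The number of such ordered pairs $(S,T)$ is at most $\binom{n}{k}\binom{n-k}{k} \leq \bigl(en/k\bigr)^{2k}$. A union bound then yields
\[
\Pr\bigl[D \text{ does not satisfy } \mathcal{P}\bigr] \leq \left(\frac{en}{k}\right)^{2k} e^{-p k^2} = \exp\!\left( 2k \log(en/k) - p k^2 \right).
\]
With $k = \lceil \varepsilon n/4 \rceil$ and $p \geq c/n$, the quantity $\log(en/k)$ is bounded by the constant $\log(4e/\varepsilon) + o(1)$, while $pk \geq c \varepsilon/4 - o(1)$. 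Consequently, choosing $c := c(\varepsilon)$ large enough so that $c\varepsilon/4 > 2\log(4e/\varepsilon) + 1$ ensures that the exponent above equals $-\Omega(n)$, so the failure probability tends to $0$ as $n \to \infty$.

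Combining this with Theorem~\ref{th::DirectedDFS} shows that, with the chosen $c$, a.a.s.\ $D \sim \mathbb{D}(n,p)$ contains a directed cycle of length at least $n - 4k + 4 \geq (1-\varepsilon)n$, as required. There is no real obstacle here; the only minor care needed is to quantify $c$ in terms of $\varepsilon$ so that the logarithmic factor $\log(en/k)$ is dominated by the term $pk$ in the exponent.
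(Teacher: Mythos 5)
Your proposal is correct and follows essentially the same route as the paper's proof: a union bound over ordered pairs of $k$-sets with $k\approx \varepsilon n/4$, followed by an application of Theorem~\ref{th::DirectedDFS}. The only cosmetic differences are that you bound the number of pairs by $(en/k)^{2k}$ rather than $4^n$, and you track the constant $c(\varepsilon)$ more explicitly; neither changes the substance of the argument.
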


The assertion of Corollary~\ref{cor::AlmostSpanningCycleDnp} is well-known and follows from several fairly old results. In fact, more accurate results are known (see, e.g.,~\cite{KLS}). Nevertheless, for completeness, we include a short simple proof.

\begin{proof} [Proof of Corollary~\ref{cor::AlmostSpanningCycleDnp}]
Let $D \sim \mathbb{D}(n,p)$ and let $k = \varepsilon n/4$. The probability that there exists an ordered pair $(S, T)$ of disjoint subsets of $V(D)$ of size 
$|S| = |T| = k$ such that $E_D(S,T) = \O$ is at most
$$
\binom{n}{k} \binom{n-k}{k} (1 - p)^{k^2} \leq 4^n \exp \left\{- \frac{c}{n} \cdot \frac{\varepsilon^2 n^2}{16} \right\} = o(1),
$$
where the last equality holds for a sufficiently large constant $c := c(\varepsilon)$. It thus follows, by Theorem~\ref{th::DirectedDFS}, that $D$ a.a.s. admits a directed cycle of length at least $n - 4k = (1 - \varepsilon) n$.
\end{proof}

The following theorem is an immediate corollary of a classical result due to Erd\H{o}s and R\'enyi~\cite{ER3}.

\begin{theorem} \label{th::perfectMatchingBnnp}
Let $1 \leq r \leq m$ be integers, and let $G$ be a random subgraph of $K_{r, m}$, obtained by keeping each of its edges independently with probability $p := p(m)$. Then, a.a.s. $G$ admits a matching of size $r$, whenever $p \geq \frac{\log m + \omega_m(1)}{m}$.
\end{theorem}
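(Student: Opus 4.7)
The plan is to deduce the statement from the classical theorem of Erd\H{o}s and R\'enyi that the authors invoke: a random subgraph of $K_{m,m}$ obtained by retaining each edge independently with probability $p \geq (\log m + \omega_m(1))/m$ a.a.s.\ admits a perfect matching. Once this is available, a suitable coupling delivers the assertion immediately.

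Concretely, write $A$ and $B$ for the two sides of $K_{r,m}$, with $|A| = r$ and $|B| = m$. The first step is to couple $G$ with a $p$-random subgraph $\widetilde{G}$ of $K_{m,m}$. To this end, enlarge $A$ to a set $A^*$ of size $m$ by introducing $m-r$ auxiliary vertices, and sample each potential edge between $A^* \setminus A$ and $B$ independently with probability $p$, independently of the edges already realised in $G$. Let $\widetilde{G}$ be the resulting bipartite graph on $A^* \cup B$. By independence of the edges, $\widetilde{G}$ has exactly the distribution of a $p$-random subgraph of $K_{m,m}$, and $G$ coincides with the subgraph of $\widetilde{G}$ induced on $A \cup B$.

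Next, I would apply the Erd\H{o}s--R\'enyi theorem to $\widetilde{G}$: since $p \geq (\log m + \omega_m(1))/m$, the graph $\widetilde{G}$ a.a.s.\ admits a perfect matching $M$. Every vertex of $A \subseteq A^*$ is saturated by $M$, so the edges of $M$ incident to $A$ constitute a matching of size $|A| = r$ lying entirely within $G$, which is exactly the object sought.

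No real obstacle arises, which is consistent with the authors' remark that the theorem is an immediate corollary of the Erd\H{o}s--R\'enyi result; the only point requiring care is the verification that the augmented sampling reproduces the distribution of a $p$-random subgraph of $K_{m,m}$, and this is clear from the independence of the additional edges from those of $G$.
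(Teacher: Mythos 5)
Your argument is correct and is exactly the natural deduction that the paper implicitly intends when it states the result as an ``immediate corollary'' of Erd\H{o}s--R\'enyi; the paper itself supplies no proof. The coupling is the right device: enlarging $A$ to $A^*$ of size $m$ and sampling the new edges independently yields a $p$-random subgraph of $K_{m,m}$ whose restriction to $A \cup B$ is precisely $G$, and a perfect matching of $\widetilde{G}$ restricted to the $r$ vertices of $A$ gives the desired matching.
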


\subsection{Connectivity, independence and cycles} \label{subsec::cycles}

A graph $G$ is said to be \emph{Hamilton-connected} if, for any two distinct vertices $u, v \in V(G)$, it contains a Hamilton path whose endpoints are $u$ and $v$. The (vertex) connectivity of $G$, denoted $\kappa(G)$, is the smallest size of a set $S \subseteq V(G)$ for which $G \setminus S$ is disconnected (or a single vertex). 
\begin{theorem} [\cite{CE}] \label{th::HamConCE}
If $G$ is a graph satisfying $\kappa(G) > \alpha(G)$, then $G$ is Hamilton-connected.
\end{theorem}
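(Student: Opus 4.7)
The plan is to proceed by contradiction via a classical Chv\'atal--Erd\H{o}s longest-path argument. Suppose that some pair $u, v \in V(G)$ admits no Hamilton $u$-$v$ path, and let $P = x_0 x_1 \cdots x_\ell$, with $x_0 = u$ and $x_\ell = v$, be a longest $u$-$v$ path in $G$; by assumption $V(P) \subsetneq V(G)$. Fix a connected component $H$ of $G \setminus V(P)$ and set $T := N_G(V(H)) \cap V(P)$, the attachment set of $H$ on $P$. Observe that $\ell \geq 2$: otherwise $P = uv$, and since $\kappa(G) \geq \alpha(G) + 1 \geq 2$, the fan lemma supplies a second $u$-$v$ path internally disjoint from the edge $uv$ and therefore of length at least $2$, contradicting the maximality of $P$.

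The crux of the argument is a rerouting lemma: for any $x_i, x_j \in T$ with $0 \leq i < j \leq \ell - 1$ and $x_{i+1} x_{j+1} \in E(G)$, the graph $G$ contains a strictly longer $u$-$v$ path. To see this, choose neighbors $h_1 \in V(H)$ of $x_i$ and $h_2 \in V(H)$ of $x_j$, together with an $h_1$--$h_2$ path $Q$ inside the connected graph $H$; the concatenation
\[
x_0 \cdots x_i \, h_1 \, Q \, h_2 \, x_j \, x_{j-1} \cdots x_{i+1} \, x_{j+1} \cdots x_\ell
\]
is a simple $u$-$v$ path of length $\ell + 1 + |Q|$. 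The main technical obstacle is verifying the simplicity of this path and correctly handling the degenerate subcases $h_1 = h_2$ (so $Q$ collapses to a single vertex) and $j = i + 1$ (so the reversed segment $x_j x_{j-1} \cdots x_{i+1}$ collapses to the single vertex $x_{i+1}$).

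Define $T^+ := \{x_{i+1} : x_i \in T, \, i < \ell\}$. Applying the rerouting lemma to distinct $x_i, x_j \in T$ with $i < j \leq \ell - 1$ shows that $T^+$ is an independent set in $G$; specialising it to $j = i + 1$ and using that $x_{i+1} x_{i+2}$ is automatically a $P$-edge yields $T^+ \cap T = \emptyset$, so no vertex of $T^+$ has a neighbor in $V(H)$. Consequently, $T^+ \cup \{h\}$ is independent in $G$ for any $h \in V(H)$. Finally, $T$ is a vertex cut separating $V(H)$ from $V(G) \setminus (V(H) \cup T)$; this complement is empty only if $V(P) = T$, in which case $x_0, x_1 \in T$ are consecutive vertices of $P$ both in $T$ (using $\ell \geq 2$), already contradicting the rerouting lemma. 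Otherwise $|T| \geq \kappa(G) \geq \alpha(G) + 1$, and since at most one element of $T$ --- namely $v = x_\ell$, if $v \in T$ --- fails to contribute a successor to $T^+$, we conclude $|T^+ \cup \{h\}| \geq |T| \geq \alpha(G) + 1$, contradicting the definition of $\alpha(G)$.
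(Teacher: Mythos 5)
The paper offers no proof of this theorem; it simply cites Chv\'atal and Erd\H{o}s~\cite{CE}, and your argument is essentially their original longest-path proof, correctly adapted to the Hamilton-connectedness setting.

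There is, however, one case that your rerouting lemma does not cover, and you rely on it. You state the lemma only for $0 \le i < j \le \ell - 1$ --- necessarily, since the rerouted path traverses $x_{j+1}$ --- and you then obtain $T^+ \cap T = \emptyset$ by ``specialising to $j = i+1$.'' That specialisation still requires $j = i + 1 \le \ell - 1$, so it says nothing when $i = \ell - 1$. Concretely, if $x_{\ell-1}$ and $x_{\ell} = v$ both lie in $T$, then $x_{\ell} \in T^+ \cap T$, your claim that $T^+ \cap T = \emptyset$ fails, and your route to the independence of $T^+ \cup \{h\}$ (and hence the final contradiction with $\alpha(G)$) is blocked. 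The fix is immediate but must be stated separately, since it does not fit the template of the rerouting lemma: pick $h_1 \in V(H)$ adjacent to $x_{\ell-1}$, $h_2 \in V(H)$ adjacent to $x_{\ell}$, and an $h_1$--$h_2$ path $Q$ inside $H$; then $x_0 \cdots x_{\ell-1}\, h_1\, Q\, h_2\, x_{\ell}$ is a $u$--$v$ path of length $\ell + 1 + |Q| > \ell$, contradicting maximality. This should appear as a third degenerate subcase alongside the two ($h_1 = h_2$ and $j = i+1$) you already flag. The same omission is why you needed the preliminary $\ell \ge 2$ argument for the $V(P) = T$ case; once the $j = \ell$ detour is handled directly, that preliminary step becomes superfluous. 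With this one addition the proof is complete.
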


The following two results provide sufficient conditions for the pancyclicity of Hamiltonian graphs.

\begin{theorem} [\cite{KS}] \label{th::deltaVsAlpha}
If $G$ is a Hamiltonian graph with $\delta(G) \geq 600 \alpha(G)$, then $G$ is pancyclic.
\end{theorem}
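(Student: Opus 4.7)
The plan is to fix a Hamilton cycle $C = v_0 v_1 \cdots v_{n-1} v_0$ in $G$ and realize every cycle length $\ell \in \{3, \ldots, n-1\}$ via chords of $C$; the length $n$ is provided by $C$ itself. For a chord $v_i v_j$ of $C$, write its \emph{span} as $d = \min(|i-j|, n-|i-j|)$. Combining such a chord with either of the two arcs of $C$ connecting its endpoints produces cycles of lengths $d+1$ and $n-d+1$, respectively. Hence it would suffice to exhibit a chord of every span $d \in \{2, \ldots, \lfloor n/2 \rfloor\}$, or, where a single chord fails to realize a target length, to combine two appropriately placed chords to fill in the missing value.

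Write $\alpha := \alpha(G)$ and $\delta := \delta(G) \geq 600 \alpha$. For \emph{short spans}, any interval of $\alpha + 1$ consecutive vertices on $C$ must contain a chord (its vertex set cannot be independent), so chord spans of size at most $\alpha$ are abundant; sliding such a window along $C$ together with the bound $\delta \gg \alpha$, which forces many neighbours of each vertex to land in any long arc, shows that in fact every span in $[2, \alpha]$ is realized by some chord. For \emph{longer spans}, I would consider, for a fixed vertex $v_i$, the set of spans $D_i := \{(j - i) \bmod n : v_i v_j \in E(G)\}$, which has size $\delta$, and argue that $\bigcup_i D_i$ already covers most of $\{2, \ldots, \lfloor n/2 \rfloor\}$. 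Any remaining missing spans would be recovered by combining two non-crossing chords $v_a v_b$ and $v_c v_d$ with $a < b \leq c < d$: the cycle traversing both chords together with the two outer arcs of $C$ has length $(c - b) + (n - d + a) + 2$, a quantity that can be tuned to any target length by selecting the quadruple $(a, b, c, d)$ from the rich chord family supplied by the minimum degree hypothesis.

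The principal obstacle is the \emph{interpolation step}, i.e., ensuring that no length in $\{3, \ldots, n-1\}$ is missed rather than merely that a dense set of lengths is covered. The cleanest route is probably a structural dichotomy: if some length $\ell$ were missed, then the distribution of chord spans along $C$ would have to be so restrictive that, from the excluded spans, one could extract an independent set of size exceeding $\delta/600$, contradicting $\delta \geq 600 \alpha$. Making this argument quantitative, and in particular pinning down the factor $600$, is the core technical work; an alternative approach is a P\'osa-style rotation, in which one starts from $C$, performs chord rotations to produce additional Hamilton cycles, and applies the single-chord and two-chord arguments to each of them, using the minimum degree condition to ensure that the rotations and chord combinations jointly realize every intermediate length.
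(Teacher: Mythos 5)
This theorem is quoted from Keevash and Sudakov (reference [KS]); the paper you are working from does not give a proof of it at all, so there is no internal proof to compare against. Evaluated on its own terms, your proposal is an outline with substantial gaps, the main one of which you yourself flag, so it cannot be accepted as a proof.

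Two concrete issues. First, the short-span step is already wrong as stated: knowing that every window of $\alpha+1$ consecutive cycle vertices contains a chord only tells you that chords of span at most $\alpha$ appear \emph{somewhere} in every window, not that every span in $\{2,\dots,\alpha\}$ is realised. All such chords could, a priori, have the same span. The minimum-degree hypothesis forces each $v_i$ to have many neighbours, but it gives no control over \emph{which} spans those neighbours occupy; in particular, getting a triangle (a chord of span exactly $2$) is already not forced by the window argument, since the three consecutive vertices $v_i,v_{i+1},v_{i+2}$ fail to be independent merely because of the two cycle edges. Second, the interpolation step is where essentially all of the work in the actual Keevash--Sudakov proof lives, and you explicitly defer it (``Making this argument quantitative \dots is the core technical work''). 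A proposal that names a difficulty and suggests ``a structural dichotomy'' or ``P\'osa-style rotation'' without carrying either out is not a proof of the statement; the constant $600$ in the theorem is a signal that the quantitative bookkeeping is nontrivial and cannot be waved through. If you want to pursue this, you would need to actually construct, for each target length $\ell$, either a single chord or an explicit pair of chords realising it, and verify that failure to do so at some $\ell$ produces an independent set of size more than $\delta/600$; at the moment neither half of that dichotomy is established.
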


\begin{theorem} [\cite{DMS}] \label{th::AlphaPancyclic}
There exists a constant $c > 0$ such that for every positive integer $k$, every Hamiltonian graph on $n \geq c k^2$ vertices with $\alpha(G) \leq k$ is pancyclic.
\end{theorem}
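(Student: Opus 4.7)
The natural strategy is to reduce to Theorem~\ref{th::deltaVsAlpha}. The assumption $\alpha(G) \le k$ combined with Tur\'an's theorem applied to $\overline{G}$ yields $e(G) \ge n(n-k)/(2k)$, so the average degree of $G$ is at least $n/k - 1 \ge 600 k \ge 600\,\alpha(G)$ once $c$ is chosen sufficiently large. Since Theorem~\ref{th::deltaVsAlpha} requires \emph{minimum} (not average) degree, my first move is to pass to a subgraph $G' \subseteq G$ obtained by iteratively deleting vertices of degree less than $600 k$, yielding $\delta(G') \ge 600 k$. A local Tur\'an estimate applied within the deleted set $V_L := V(G) \setminus V(G')$ bounds $|V_L| \le O(k^2) = O(n/c)$, so $|V(G')| = (1 - o(1)) n$ and of course $\alpha(G') \le \alpha(G) \le k$.

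Next I would verify that $G'$ is Hamiltonian in order to apply Theorem~\ref{th::deltaVsAlpha} to it. I would combine the Chv\'atal--Erd\H{o}s framework (Theorem~\ref{th::HamConCE}) with the observation that any cut $S$ of $G'$ forces each component of $G' \setminus S$ to contain at least $\delta(G') + 1 - |S|$ vertices, while the number of components is bounded by $\alpha(G') \le k$; a counting argument should then yield $\kappa(G') > \alpha(G')$, hence Hamilton-connectedness of $G'$. Once Hamiltonicity is established, Theorem~\ref{th::deltaVsAlpha} produces cycles of every length in $[3, |V(G')|]$ inside $G$. The $O(k^2) = o(n)$ remaining cycle lengths in $(|V(G')|, n]$ are then obtained by incrementally re-inserting the deleted vertices into an existing cycle: for each $v \in V_L$, a pigeonhole on its $G$-neighbours placed along the Hamilton cycle of $G$ locates two cycle-consecutive neighbours of $v$ in the current cycle, and inserting $v$ between them increases the cycle's length by exactly one.

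The main obstacle is the Hamiltonicity step. The bound $\delta(G') \ge 600 k$ does not a priori preclude a cut $S$ with $|S| \le \alpha(G')$ leaving $G'$ with two giant components, so deducing $\kappa(G') > \alpha(G')$ may require either a more delicate choice of $G'$ (say peeling to a higher minimum-degree threshold, at the cost of a larger constant $c$), or a direct construction of a Hamilton cycle of $G'$ by re-routing the Hamilton cycle of $G$ around the deleted vertices, using short linking paths guaranteed by the density of $G$. The final re-insertion step may also require some care if the deleted vertices cannot be inserted one at a time into a single cycle and must instead be handled simultaneously via a more global argument.
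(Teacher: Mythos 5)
This theorem is not proved in the present paper; it is quoted verbatim from~\cite{DMS} (Dragani\'c, Munh\'a Correia and Sudakov) and used as a black box, so there is no in-paper argument to compare your proposal against. For context, the statement resolves a conjecture of Erd\H{o}s from the 1970s on the pancyclicity of Hamiltonian graphs, and the proof in~\cite{DMS} is a substantial piece of work; it does not reduce to the Keevash--Sudakov Theorem~\ref{th::deltaVsAlpha}, but instead analyses the chord structure of the given Hamilton cycle directly.

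On its own merits, your proposal has a genuine gap at exactly the point you flag, and I do not see how to close it within this framework. The peeling step and the degeneracy bound $|V_L|\le O(k^2)$ are fine, but after peeling you have no control on $\kappa(G')$: a minimum degree of $600k$ does not preclude $G'$ having a cut vertex (or being disconnected outright), and the counting you sketch only shows that each side of a small cut is large --- it produces two big components, not $k+1$ of them, so it gives no contradiction with $\alpha(G')\le k$, and Chv\'atal--Erd\H{o}s cannot be invoked. Furthermore, even if $G'$ were Hamiltonian, the re-insertion step is also broken: a vertex $v\in V_L$ was removed because its degree in the \emph{residual} graph was small, and every one of $v$'s $G$-neighbours may itself lie in $V_L$; such a $v$ has no neighbours in $V(G')$ at all and cannot be spliced into a cycle living in $G'$. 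The Hamilton cycle of $G$ supplies length $n$, and Theorem~\ref{th::deltaVsAlpha} applied to $G'$ (if it applied) would supply lengths up to $|V(G')|$, but the $\Theta(k^2)$ intermediate lengths are unaccounted for, and filling precisely this kind of gap is where the real difficulty of the Erd\H{o}s conjecture lies. Your alternative suggestion of re-routing the Hamilton cycle of $G$ around $V_L$ using short linking paths is closer in spirit to what one actually has to do, but it is only a sketch and would require a careful Pósa-type rotation/absorption argument to make rigorous.
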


\subsection{Connectivity and linkage} \label{subsec::linkage}

The following two results ensure the existence of a highly connected subgraph in a given graph.

\begin{theorem} [\cite{Mader}] \label{th::Mader}
Every graph of average degree at least $k$ admits a $k/4$-connected subgraph.
\end{theorem}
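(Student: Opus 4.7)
The plan is to use an extremal/minimality argument on a suitably chosen density parameter. Consider all subgraphs $H \subseteq G$ satisfying the edge-density inequality
$$
2|E(H)| \;\geq\; k\bigl(|V(H)| - k/4\bigr),
$$
and let $H$ be one of minimum order among these. Since $G$ itself has $2|E(G)| \geq k|V(G)| \geq k(|V(G)| - k/4)$, such an $H$ exists. The density condition also forces $|V(H)| > k/4$, because otherwise the trivial bound $|E(H)| \leq \binom{|V(H)|}{2}$ would violate it. The subgraph $H$ will be the desired $k/4$-connected subgraph.

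The first step is to establish a lower bound on the minimum degree of $H$. For any vertex $v \in V(H)$, the subgraph $H - v$ has strictly fewer vertices than $H$, so by minimality it fails the density inequality; expanding $2|E(H-v)| = 2|E(H)| - 2\deg_H(v)$ and comparing with the density bound for $H$ yields $\deg_H(v) > k/2$. In particular $\delta(H) > k/2 \geq k/4$, so the conclusion is consistent with the claimed connectivity.

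The main step is to rule out any small vertex separator. Suppose for contradiction that $H$ admits a separator $S \subseteq V(H)$ with $|S| < k/4$, so that $V(H) \setminus S$ splits into two nonempty parts $A, B$ with no edges between them. Both $H[A \cup S]$ and $H[B \cup S]$ are proper subgraphs of $H$ in terms of vertex count, hence by minimality each violates the density inequality. Adding the two resulting strict inequalities and using the identity
$$
2|E(H[A \cup S])| + 2|E(H[B \cup S])| \;=\; 2|E(H)| + 2|E(H[S])|,
$$
together with the density lower bound on $|E(H)|$, collapses after cancellation to
$$
2|E(H[S])| \;<\; k\bigl(|S| - k/4\bigr).
$$
Since $|S| < k/4$, the right-hand side is negative, contradicting $|E(H[S])| \geq 0$. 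Therefore $H$ has no separator of size less than $k/4$, i.e., $\kappa(H) \geq k/4$.

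The one delicate point is the choice of the shift $k/4$ in the density condition: it must be small enough that $G$ itself satisfies the inequality (so a minimum exists), yet large enough that the cancellation in the separator step produces the clean negative quantity $|S| - k/4$. Any larger shift would either leave $G$ outside the class or weaken the connectivity conclusion, while any smaller shift would not absorb the double-counted edges inside $S$. Once this calibration is chosen correctly, the argument is essentially mechanical; the only conceptual content is the recognition that the two proper subgraphs produced by the separator together reconstruct $H$ with $E(H[S])$ overcounted exactly once.
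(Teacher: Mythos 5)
The paper only cites this result (it is Mader's theorem, and no proof appears in the text), so there is nothing to compare against; but your argument has a genuine gap in the very first step. You claim that the density condition $2|E(H)| \geq k(|V(H)| - k/4)$ forces $|V(H)| > k/4$ because the trivial bound $|E(H)| \leq \binom{|V(H)|}{2}$ would otherwise be violated. This is backwards: when $|V(H)| \leq k/4$ the right-hand side $k(|V(H)| - k/4)$ is \emph{nonpositive}, so the inequality holds vacuously for every such subgraph (including a single vertex, or even the empty subgraph, once $k \geq 4$). Hence a subgraph $H$ of minimum order in your class is a single vertex, which is not $k/4$-connected, and the later steps cannot get off the ground: $H-v$ and the pieces $H[A\cup S]$, $H[B\cup S]$ need to lie \emph{outside} the class for minimality to bite, and for a trivial $H$ they do not.

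The repair is exactly what Mader's original argument (and the textbook version in Diestel) does: put an explicit lower bound on the order into the definition of the class — e.g.\ require $|V(H)| > k/2$ in addition to the density inequality. Then $G$ itself is still a member, a minimum-order $H$ is nontrivial, and one checks that the smaller subgraphs appearing in the minimality steps stay above the order threshold: the density inequality together with $|E(H)| \leq \binom{|V(H)|}{2}$ rules out $|V(H)|$ just above $k/2$, giving $|V(H)| > k/2 + 1$ so that $H-v$ qualifies; and once $\delta(H) > k/2$ is established, each vertex of $A$ has all its $H$-neighbours in $A\cup S$, so $|A\cup S| > k/2$ and likewise $|B\cup S| > k/2$. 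With that amendment your degree computation and your separator cancellation (which are both arithmetically correct) do give $\kappa(H) \geq k/4$, and the proof becomes the standard extremal/minimality proof of Mader's theorem.
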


\begin{observation} \label{obs::extractLargeSet}
Let $G = (V,E)$ be an $n$-vertex graph. Then there exists a set $A \subseteq V$ such that $G[A]$ is $\frac{n}{5 \alpha(G)}$-connected.
\end{observation}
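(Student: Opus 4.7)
The plan is to lower bound the average degree $\bar{d}(G) = 2|E(G)|/n$ in terms of $\alpha(G)$, and then invoke Mader's theorem (Theorem~\ref{th::Mader}) to extract the required highly connected subgraph. This reduces the observation to the concatenation of two off-the-shelf tools.

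For the degree bound, I would start from the Caro--Wei inequality, which asserts that $\alpha(G) \geq \sum_{v \in V} 1/(d_G(v)+1)$. Combining this with AM--HM (equivalently, Cauchy--Schwarz in the form $(\sum_v a_v)(\sum_v 1/a_v) \geq n^2$ applied to $a_v := d_G(v)+1$) yields $\alpha(G) \geq n^2/(2|E(G)|+n)$, which rearranges to $\bar{d}(G) \geq n/\alpha(G) - 1$.

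Feeding this bound into Theorem~\ref{th::Mader} with $k = \bar{d}(G)$ produces a set $A \subseteq V$ such that $G[A]$ is $\bar{d}(G)/4$-connected, and thus at least $(n/\alpha(G) - 1)/4$-connected. A direct calculation shows that $(n/\alpha(G) - 1)/4 \geq n/(5\alpha(G))$ whenever $n \geq 5\alpha(G)$, which is the only regime in which the observation is substantive; when $n < 5\alpha(G)$ the target connectivity is below $1$ and any singleton set $A$ satisfies the bound under the usual convention.

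I do not foresee a real obstacle: both Caro--Wei and Mader are used as black boxes, and the only calibration is the choice of the factor $1/5$ in the statement (rather than $1/4$), which is made precisely to absorb the additive $-1$ produced by the Caro--Wei bound so that a single uniform constant works across all values of $\alpha(G)$.
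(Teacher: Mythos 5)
Your proposal is correct and takes essentially the same route as the paper: obtain the average-degree bound $\bar{d}(G) \geq n/\alpha(G) - 1$ and feed it into Mader's theorem. The paper cites Tur\'an's Theorem for that degree bound, whereas you rederive it via Caro--Wei and AM--HM; since Caro--Wei is the standard probabilistic proof of precisely that corollary of Tur\'an, this is a cosmetic rather than substantive difference, and your calibration of the constant $1/5$ matches the paper's.
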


\begin{proof}
It follows by Tur\'an's Theorem that the average degree in $G$ is at least $n/\alpha(G) - 1$. It then follows by Theorem~\ref{th::Mader} that there exists a set $A \subseteq V$ such that $G[A]$ is $\frac{n}{5 \alpha(G)}$-connected. 
\end{proof}

The following simple observation allows one to add vertices to a graph while maintaining high connectivity.

\begin{observation} \label{obs::kCon}
Let $G = (V,E)$ be a $k$-connected graph. Let $x \notin V$ and $u_1, \ldots, u_k \in V$ be arbitrary vertices. Then, $G' := (V \cup \{x\}, E \cup \{x u_i : i \in [k]\})$ is $k$-connected.
\end{observation}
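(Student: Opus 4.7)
The plan is to verify the definition of $k$-connectivity directly by a case analysis on whether the new vertex $x$ belongs to the removed set. Note first that since $G$ is $k$-connected we have $|V| \geq k+1$, hence $|V(G')| = |V|+1 \geq k+2 \geq k+1$, so the order condition in the definition of $k$-connectivity is satisfied. It then suffices to show that $G' \setminus S$ is connected for every $S \subseteq V(G')$ with $|S| \leq k-1$.

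Fix such an $S$. If $x \in S$, then $G' \setminus S = G \setminus (S \setminus \{x\})$, and $|S \setminus \{x\}| \leq k-2 < k$, so $G \setminus (S \setminus \{x\})$ is connected because $G$ is $k$-connected. If instead $x \notin S$, then $|S| \leq k-1 < k$ and hence $G \setminus S$ is also connected by the $k$-connectivity of $G$. Moreover, since $x$ has $k$ neighbours $u_1, \ldots, u_k$ in $G'$ and $|S| \leq k-1$, at least one $u_i$ lies in $V \setminus S$; thus $x$ has a neighbour in $G' \setminus S$. It follows that $G' \setminus S$, which is obtained from the connected graph $G \setminus S$ by adding the vertex $x$ together with at least one edge to $G \setminus S$, is connected as well.

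There is no genuine obstacle here: the argument is a one-line case distinction based on whether $x \in S$, leveraging $k$-connectivity of $G$ and the fact that fewer than $k$ deletions cannot remove all $k$ of $x$'s new neighbours.
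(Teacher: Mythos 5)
The paper states Observation~\ref{obs::kCon} without proof, so there is no paper argument to compare against. Your case analysis on whether $x \in S$ is correct and is essentially the only natural way to argue this: if $x \in S$ then $G' \setminus S = G \setminus (S \setminus \{x\})$ and $|S \setminus \{x\}| \leq k-2$; if $x \notin S$ then $G \setminus S$ is connected and $x$ retains at least one of its $k$ neighbours since $|S| \leq k-1$. One small point worth making explicit (as the paper leaves it implicit): the statement requires $u_1, \ldots, u_k$ to be \emph{distinct}, otherwise $x$ could have fewer than $k$ neighbours and the second case would fail.
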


The following result allows one to partition a graph into large highly connected subgraphs.

\begin{lemma} [\cite{BFKM}] \label{lem::highConnectivity}
Let $H = (V,E)$ be an $n$-vertex graph with minimum degree $k > 0$. Then, there exists a partition $V = V_1 \cup \ldots \cup V_t$ such that, for every $i \in [t]$, the subgraph $H[V_i]$ is $k^2/(16 n)$-connected and $|V_i| \geq k/8$. 
\end{lemma}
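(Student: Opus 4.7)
Set $s := k/8$ and $c := k^2/(16n)$, and note that $c \leq k/8$ since $k \leq n$. The plan is to fix a maximal collection of pairwise disjoint subsets $V_1, \dots, V_m \subseteq V(H)$ with each $H[V_i]$ being $c$-connected and $|V_i| \geq s$, and then to absorb the remaining vertices into suitable existing parts one by one via Observation~\ref{obs::kCon}.

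Since $H$ has average degree at least $k$, Theorem~\ref{th::Mader} produces a subgraph of $H$ that is $k/4$-connected; its vertex set $B$ satisfies $|B| \geq k/4 + 1 > s$, and because $H[B]$ is at least as connected as that subgraph, $H[B]$ is $c$-connected, so any maximal collection is non-empty. The lower bound $|V_i| \geq s$ immediately gives the key inequality $m \leq n/s = 8n/k$. Writing $L := V \setminus \bigcup_i V_i$, the crucial claim I would establish is that whenever $L \neq \emptyset$, some vertex $v \in L$ has fewer than $k/2$ neighbors in $L$. Were every $v \in L$ to have at least $k/2$ neighbors in $L$, the induced subgraph $H[L]$ would have minimum degree at least $k/2$, and Theorem~\ref{th::Mader} would produce a $k/8$-connected subgraph of $H[L]$ on a vertex set $B \subseteq L$ of size at least $k/8 + 1 > s$. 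As $k/8 \geq c$, the induced graph $H[B]$ would be $c$-connected, and appending $B$ to the family would contradict its maximality.

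Given such $v$, the pigeonhole principle together with $m \leq 8n/k$ forces some $V_i$ to contain at least $(k/2)/m \geq c$ neighbors of $v$, and Observation~\ref{obs::kCon} then ensures that replacing $V_i$ by $V_i \cup \{v\}$ preserves $c$-connectivity while keeping $|V_i| \geq s$. I would iterate this absorption step until $L = \emptyset$. The main subtlety requiring care is the repeated invocation of maximality throughout the iteration: because absorption only enlarges existing parts and never creates new ones, the current leftover is always contained in the initial leftover, so any newly-extractable $c$-connected part of size at least $s$ would already have been available at the initial configuration, violating its maximality. The two numerical inequalities $k/8 \geq c$ and $m \leq 8n/k$ both rest on the trivial bound $k \leq n$, which holds for any minimum degree in an $n$-vertex graph.
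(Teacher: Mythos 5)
Your proof is correct, and it follows essentially the same two-phase strategy that the paper itself uses for the closely related Lemma~\ref{lem::partition} (and that underlies the original argument in~\cite{BFKM}): repeatedly extract maximal highly-connected pieces via Mader's theorem, then absorb the leftover vertices one by one using a degree/pigeonhole count together with Observation~\ref{obs::kCon}. The one subtlety you flagged --- that maximality must be invoked against the \emph{initial} family at every absorption step, which works because the current leftover only shrinks and so any extractable piece from it would already have contradicted the original choice --- is exactly the point that needs care, and you handled it correctly; the numerical checks ($c \leq k/8$, $m \leq 8n/k$, and $(k/2)/m \geq c$) all go through.
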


The following result is a variation of Lemma~\ref{lem::highConnectivity} that is better suited for some of our proofs.

\begin{lemma} \label{lem::partition}
Let $G = (V,E)$ be an $n$-vertex graph. There exists a partition $V_1 \cup \ldots \cup V_t$ of $V$ such that the following properties hold.
\begin{description}
\item [(i)] $t \leq 19 \alpha(G) \log n$;

\item [(ii)] $\sum_{i \in J} |V_i| \geq n/2$, where $J := \{i \in [t] : |V_i| \geq 0.1 n/\alpha(G)\}$;

\item [(iii)] $|V_i| \geq \delta(G)/\log n$ for every $i \in [t]$;

\item [(iv)] $\kappa(G[V_i]) \geq \frac{\delta(G)}{20 \alpha(G) \log n}$ for every $i \in [t]$.
\end{description}
\end{lemma}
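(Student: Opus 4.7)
The plan is an iterative extraction via Observation~\ref{obs::extractLargeSet}, followed by an absorption step using Observation~\ref{obs::kCon}. Write $\alpha := \alpha(G)$, $\delta := \delta(G)$, $k^{*} := \lceil \delta/(20\alpha\log n) \rceil$, and initialise $U_0 := V$. The algorithm is: while $|U_i| \geq 5\alpha\delta/\log n$, apply Observation~\ref{obs::extractLargeSet} to $G[U_i]$ to obtain $V_{i+1} \subseteq U_i$ with $\kappa(G[V_{i+1}]) \geq |U_i|/(5\alpha(G[U_i])) \geq |U_i|/(5\alpha) \geq \delta/\log n$, and set $U_{i+1} := U_i \setminus V_{i+1}$. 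Each $V_{i+1}$ immediately satisfies (iii) and, since $\delta/\log n \geq k^{*}$ (using $\alpha \geq 1$), also (iv). The shrinkage $|U_{i+1}| \leq |U_i|(1 - 1/(5\alpha))$ bounds the iteration count by $T \leq 5\alpha\log(n\log n/(5\alpha\delta)) + 1$, which I plan to show is at most $19\alpha\log n$, yielding (i).

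Property~(ii) comes for free from a phase analysis: as long as $|U_i| \geq n/2$, the extracted $V_{i+1}$ has size $\geq n/(10\alpha) = 0.1n/\alpha$, placing its index in $J$, and this sub-phase terminates precisely when $|U_i|$ first drops below $n/2$, so its parts collectively cover more than $n/2$ vertices. For the residual $U := U_T$ of size $|U| < 5\alpha\delta/\log n$, I would absorb its vertices one at a time: for $v \in U$, pigeonholing its $\geq \delta - |U| + 1$ neighbours in $\bigcup_i V_i$ across the $T$ parts yields some $V_i$ containing at least $(\delta - |U| + 1)/T$ neighbours of $v$, which I plan to show exceeds $k^{*}$. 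Observation~\ref{obs::kCon} then lets me attach $v$ to $V_i$ while preserving $k^{*}$-connectivity, and (iii) is retained as sizes only grow.

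The main obstacle is keeping the constants $5$, $19$, and $20$ in lockstep: with $T \leq 11\alpha\log n$ and $|U|$ potentially as large as $5\alpha\delta/\log n$, the pigeonhole step comfortably produces $k^{*}$ neighbours in a single part only when $\alpha$ is a sufficiently small multiple of $\log n$. In the complementary regime $\alpha = \Omega(\log n)$ I would either lower the extraction threshold (creating a handful of smaller intermediate parts and merging them with an earlier large part via an averaging argument on edges leaving $U$), or switch to Lemma~\ref{lem::highConnectivity} on $G[U_i]$ in the later iterations, where its $\Omega(\delta^{2}/|U_i|)$ connectivity guarantee dominates $k^{*}$ in this range. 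A trivial branch covers the degenerate case $k^{*} = 0$ (equivalently, $\delta < 20\alpha\log n$), where the single-part partition $V_1 := V$ verifies (i)--(iv) directly.
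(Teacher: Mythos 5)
The proposal follows the same general plan as the paper --- iterative extraction of highly connected subgraphs via Observation~\ref{obs::extractLargeSet}, then pigeonhole absorption of the leftover via Observation~\ref{obs::kCon} --- but your stopping criterion creates a genuine gap. You stop extracting as soon as $|U_i|$ drops below $5\alpha\delta/\log n$, which means the leftover $U$ may contain up to roughly $5\alpha\delta/\log n$ vertices. Your pigeonhole bound of $\delta - |U| + 1$ neighbours in $\bigcup_i V_i$ then becomes vacuous whenever $|U| > \delta$, i.e.\ whenever $\alpha$ exceeds roughly $\log n / 5$, and you acknowledge this. Neither of your two proposed repairs is developed or, as far as I can tell, workable: ``lowering the extraction threshold'' does not obviously produce parts with the required connectivity, and applying Lemma~\ref{lem::highConnectivity} to $G[U_i]$ would give a bound of the form $\Omega(\delta(G[U_i])^2/|U_i|)$, but $\delta(G[U_i])$ is not under control --- once a vertex's $G$-neighbours have been extracted into earlier parts, its degree inside $U_i$ can be as small as $0$, so this is circular.

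The paper sidesteps all of this with a connectivity-based rather than size-based stopping rule: extract maximum-size $(\delta/\log n)$-connected subgraphs until none remain. At that point Mader's theorem forces the leftover $W$ to be $(4\delta/\log n)$-degenerate, and processing $W$ in a degeneracy order gives every absorbed vertex at least $\delta - 4\delta/\log n$ neighbours among already-placed vertices, so the pigeonhole over $t \leq 19\alpha\log n$ parts yields the needed $\delta/(20\alpha\log n)$ uniformly in $\alpha$. Your iteration-count estimate via geometric decay is a cosmetic variant of the paper's dyadic bucketing and would be fine; and your argument for (ii) matches the paper in spirit. One further slip: you write that the trivial single-part branch covers ``$k^{*} = 0$ (equivalently $\delta < 20\alpha\log n$)''. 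In fact $\lceil \delta/(20\alpha\log n) \rceil = 0$ only when $\delta = 0$; for $0 < \delta < 20\alpha\log n$, property (iv) forces each $G[V_i]$ to be connected, which the single part $V_1 = V$ need not be.
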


\begin{proof}
Let $U_1 \subseteq V$ be a set of maximum size such that $G[U_1]$ is $(\delta(G)/\log n)$-connected; such a set exists by Theorem~\ref{th::Mader}. Similarly, let $U_2 \subseteq V \setminus U_1$ be a set of maximum size such that $G[U_2]$ is $(\delta(G)/\log n)$-connected (it is possible that no such set $U_2$ exists, in which case the process ends with $U_1$). Continuing in this manner for as long as possible, let $U_1, \ldots, U_t$ be sets where, for every $i \in [t]$, the set $U_i \subseteq V \setminus (U_1 \cup \ldots \cup U_{i-1})$ is of maximum size such that $G[U_i]$ is $(\delta(G)/\log n)$-connected. 

Let $k$ denote the smallest integer for which $2^{-k} n \leq \alpha(G) \delta(G)$; note that $k \leq \log_2 n$. For every $j \in [k]$, let $I_j = \{i \in [t] : 2^{-j} n < |V \setminus (U_1 \cup \ldots \cup U_{i-1})| \leq 2^{1-j} n\}$; note that $I_j = \O$ is possible. Since $\alpha(G[V \setminus (U_1 \cup \ldots \cup U_{i-1})]) \leq \alpha(G)$ for every $i \in [t]$, it follows by Observation~\ref{obs::extractLargeSet} that $|U_i| \geq \frac{n}{2^j \cdot 5 \alpha(G)}$ holds for every $j \in [k]$ and every $i \in I_j$. Hence,
\begin{equation} \label{eq::ii}
|U_i| \geq \frac{n}{10 \alpha(G)} \textrm{ for every } i \in I_1,
\end{equation}
and $|I_j| \leq \frac{2^{1-j} n \cdot 2^j \cdot 5 \alpha(G)}{n} = 10 \alpha(G)$ for every $j \in [k]$. Moreover, since $|U_i| \geq \delta(G)/\log n$ holds by construction for every $i \in [t]$, it follows by the definition of $k$ that $t - |I_1 \cup \ldots \cup I_k| \leq \alpha(G) \log n$. We conclude that 
\begin{equation} \label{eq::i}
t \leq 10 \alpha(G) k + \alpha(G) \log n \leq 19 \alpha(G) \log n.
\end{equation}

Let $W = V \setminus (U_1 \cup \ldots \cup U_t)$. It follows by the construction of $U_1, \ldots, U_t$ and by Theorem~\ref{th::Mader} that $G[W]$ is $(4 \delta(G)/\log n)$-degenerate; let $w_1, \ldots, w_{\ell}$ be an ordering of the vertices of $W$ such that $\deg_G(w_i, \{w_{i+1}, \ldots, w_{\ell}\}) \leq 4 \delta(G)/\log n$ for every $i \in [\ell]$. Define the required sets $V_1, \ldots, V_t$ by adding the vertices of $W$ to $U_1 \cup \ldots \cup U_t$ one by one as follows. For every $0 \leq j \leq \ell$, define sets $U_1(j), \ldots, U_t(j)$ such that $U_1(j) \cup \ldots \cup U_t(j) = U_1 \cup \ldots \cup U_t \cup \{w_1, \ldots, w_j\}$. For every $i \in [t]$, we start this process with $U_i(0) := U_i$ and end it with $V_i := U_i(\ell)$. Suppose that, for some $j \in [\ell]$, we have already defined $U_1(j-1), \ldots, U_t(j-1)$ and now wish to define $U_1(j), \ldots, U_t(j)$. Let $i \in [t]$ be the smallest index for which $\deg_G(w_j, U_i(j-1)) \geq \frac{\delta(G)}{20 \alpha(G) \log n}$; such an index must exist since $\deg_G(w_j, \{w_{j+1}, \ldots, w_{\ell}\}) \leq 4 \delta(G)/\log n$ and $t \leq 19 \alpha(G) \log n$. Set $U_i(j) = U_i(j-1) \cup \{w_j\}$, and for every $r \in [t] \setminus \{i\}$, set $U_r(j) = U_r(j-1)$.

Note that, by construction, $V_1 \cup \ldots \cup V_t$ is a partition of $V$; in view of~\eqref{eq::ii} and~\eqref{eq::i}, the proof of (i) and (ii) is thus complete. Recalling that $G[U_i]$ is $(\delta(G)/\log n)$-connected for every $i \in [t]$, we note that $|V_i| \geq |U_i| \geq \delta(G)/\log n$ for every $i \in [t]$; this proves (iii). Finally, it follows by Observation~\ref{obs::kCon} that $\kappa(G[V_i]) \geq \frac{\delta(G)}{20 \alpha(G) \log n}$ for every $i \in [t]$; this proves (iv).
\end{proof}


The following result asserts that a random induced subgraph of a graph $G$ inherits, with high probability, some of the connectivity of $G$. 

\begin{theorem} [Theorem 1 in~\cite{CGGHK} -- abridged] \label{th::ConSplit}
Let $G = (V, E)$ be a $k$-connected graph on $n$ vertices, and let $S$ be a randomly sampled subset of $V$, where each vertex $v \in V$ is included in $S$ independently with probability $p := p(n) \geq \alpha \sqrt{\log n/k}$, for a sufficiently large constant $\alpha$. Then, $\kappa(G[S]) = \Omega(k p^2)$ holds with probability $1 - \exp \{- \Omega(k p^2)\}$.
\end{theorem}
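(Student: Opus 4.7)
My first instinct is to translate connectivity into vertex-disjoint paths via Menger's theorem and show that enough paths survive the sampling. To obtain $\kappa(G[S]) \geq t := c_0 k p^2$ it suffices, by Menger, to show that for every pair $u, v \in S$ there are at least $t$ internally vertex-disjoint $u$--$v$ paths in $G[S]$. Fix $u, v$; Menger in $G$ supplies $k$ internally disjoint $u$--$v$ paths $P_1, \ldots, P_k$ satisfying $\sum_i \ell_i \leq n + k$, where $\ell_i$ is the length of $P_i$. Path $P_i$ survives in $G[S]$ with probability $p^{\ell_i - 1}$, so by convexity the expected number of survivors is at least $k p^{n/k}$, which is useless in the sparse regime. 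This naive path-survival approach therefore fails.

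The cleaner route is to work directly with the isoperimetric reformulation of $t$-connectivity. The bad event is the existence of a set $A \subseteq V$ with $A \cap S \neq \emptyset$, $(V \setminus N_G[A]) \cap S \neq \emptyset$, and $|N_G(A) \cap S| < t$. For any such $A$ the set $N_G(A)$ is a vertex separator in $G$ between two nonempty sides, so the $k$-connectivity of $G$ forces $|N_G(A)| \geq k$. Consequently $|N_G(A) \cap S| \sim \mathrm{Bin}(|N_G(A)|, p)$ has mean at least $kp$, and since $t \leq kp/2$ for $p \leq 1/(2 c_0)$, the Chernoff lower tail gives
\[
\P[\,|N_G(A) \cap S| < t\,] \leq \exp\{-\Omega(kp)\},
\]
which is $\exp\{-\Omega(\sqrt{k \log n})\}$ under the assumption $p \geq \alpha \sqrt{\log n/k}$. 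This per-set bound is substantially stronger than the target $\exp\{-\Omega(kp^2)\}$, leaving slack to absorb a (sub-)polynomial union bound.

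The main obstacle is executing that union bound, since $A$ a priori ranges over all of $2^V$. The strategy would be to restrict attention to \emph{tight} witnesses, i.e.\ sets $A$ for which $N_G(A)$ realises a minimum $(A, V \setminus N_G[A])$ vertex separator; one then attempts, via the submodularity of the boundary function $A \mapsto |N_G(A)|$ and standard uncrossing, to extract a canonical family of at most $n^{O(1)}$ tight witnesses whose boundaries capture every possible bad event. Stratifying by $s := |N_G(A)| \geq k$, this yields
\[
\P[\text{BAD}] \leq \sum_{s \geq k} n^{O(1)} \exp\{-\Omega(sp)\},
\]
whose leading term $n^{O(1)} \exp\{-\Omega(kp)\}$ is absorbed into $\exp\{-\Omega(kp^2)\}$ precisely because $p \geq \alpha \sqrt{\log n/k}$ gives $kp^2 \geq \alpha^2 \log n$, soaking up the polynomial cut count. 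I expect this combinatorial enumeration of canonical tight vertex separators --- essentially the vertex-cut analogue of Karger's cactus representation of minimum edge cuts, which underpins his edge-sparsification analysis --- to be by far the most delicate step; the concentration and union-bound calculations are routine once that structural input is in hand.
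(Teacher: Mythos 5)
Note first that Theorem~\ref{th::ConSplit} is imported from~\cite{CGGHK} and used as a black box: the paper cites it and does not prove it, so there is no in-paper argument to compare your sketch against. The evaluation below is therefore of the sketch on its own terms.

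The reformulation of the bad event via witness sets $A\subseteq V$ with $A\cap S\neq\emptyset$, $(V\setminus N_G[A])\cap S\neq\emptyset$ and $|N_G(A)\cap S|<t$ is the right one, and the per-witness Chernoff bound $\exp\{-\Omega(kp)\}$ is correct (and indeed beats the target $\exp\{-\Omega(kp^2)\}$). But the step you flag as the ``delicate'' one is not a deferred technicality --- it is where the argument breaks. You need, uniformly in $k$, an $n^{O(1)}$ bound on the number of canonical witnesses of each boundary size $s\ge k$. For edge cuts this is exactly what Karger's cut-counting theorem supplies (at most $n^{2\alpha}$ cuts of weight $\le\alpha\lambda$), and it is the engine behind edge-sparsification-by-sampling. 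No analogous theorem holds for vertex separators: already the number of \emph{minimum} vertex cuts can grow exponentially in $k$, and the hypothesis $p\ge\alpha\sqrt{\log n/k}$ together with $p\le 1$ forces $k\ge\alpha^2\log n$, so this exponential blow-up cannot be absorbed by the $\exp\{-\Omega(kp)\}$ per-witness bound. Submodularity of $A\mapsto|N_G(A)|$ and uncrossing yield a crossing-free (laminar) subfamily of minimum separators, not a polynomial bound on all tight witnesses at every scale; the cactus representation you invoke is likewise an edge-cut phenomenon with no vertex-cut counterpart of the required strength. Without this structural input the union bound does not close, and the sketch does not prove the theorem. The actual proof in~\cite{CGGHK} does not proceed by enumerating vertex separators and takes a genuinely different route.
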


Improving earlier results by Robertson and Seymour~\cite{RS} and by Bollob\'as and Thomason~\cite{BT}, it was proved by Thomas and Wollan~\cite{TW} that highly connected graphs are also highly \emph{linked}.

\begin{theorem} [\cite{TW}] \label{th::BT}
Let $G$ be a graph and let $x_1, y_1, \ldots, x_r, y_r$ be $2r$ distinct vertices of $G$. If $\kappa(G) \geq 10 r$, then $G$ admits pairwise vertex-disjoint paths $P_1, \ldots, P_r$ such that, for every $i \in [r]$, the endpoints of $P_i$ are $x_i$ and $y_i$.
\end{theorem}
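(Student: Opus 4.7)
The plan is to proceed by contradiction via a minimum counterexample. Fix an instance $(G; x_1, y_1, \ldots, x_r, y_r)$ with $\kappa(G) \geq 10r$ but no $r$ pairwise vertex-disjoint $(x_i,y_i)$-paths, minimizing $|V(G)| + |E(G)|$. Standard minimality reductions then force structural properties on $G$: the terminals are pairwise non-adjacent (otherwise contract a terminal edge and reduce $r$), every non-terminal vertex has degree at least $10r$ (otherwise delete it and apply induction), and the deletion or contraction of any non-terminal edge either destroys the $10r$-connectivity or produces a smaller counterexample. In particular $\delta(G) \geq 10r$, so $G$ is rather dense, with $|E(G)| \geq 5r|V(G)|$.

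Next, I would isolate a highly connected hub $H \subseteq V(G)$ into which every terminal can be routed. Applying Theorem~\ref{th::Mader} to $G$ (or to a suitable contraction of $G$ obtained by collapsing non-terminal low-impact pieces) yields a subgraph of connectivity comparable to $\tfrac{5r}{2}$, which will serve as $H$. For each terminal $t \in \{x_1, y_1, \ldots, x_r, y_r\}$, Menger's theorem combined with $\kappa(G) \geq 10r$ supplies a fan of $10r$ internally vertex-disjoint paths from $t$ into distinct vertices of $H$; processing the $2r$ terminals greedily (shortest fan first, deleting interior vertices used so far) ensures that the fans from distinct terminals are pairwise internally vertex-disjoint. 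Inside $H$, the high connectivity permits a further Menger-style routing that pairs up the fan-endpoints of $x_i$ with those of $y_i$ by $r$ internally disjoint paths in $H$; concatenation with the fans produces the desired linkage, contradicting the counterexample.

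The main obstacle is obtaining the sharp constant $10r$ rather than a weaker one (the high-level outline above, executed with room to spare, is essentially the classical Bollob\'as--Thomason argument that yields $\kappa(G) \geq 22r$). Tightening to $10r$ demands a careful edge-count on the minimum counterexample, in the spirit of a discharging argument that exploits the exact trade-off between average degree and connectivity in Mader's theorem, and a parsimonious choice of both the fans and the internal $H$-routing so that no step squanders a constant fraction of the connectivity budget. This extremal estimate, rather than the routing outline itself, is where the real work of the Thomas--Wollan refinement is concentrated, and is the step I would expect to consume the bulk of the proof.
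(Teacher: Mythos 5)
The paper does not prove Theorem~\ref{th::BT} at all --- it is imported verbatim from Thomas and Wollan~\cite{TW} and used as a black box, so there is no ``paper's own proof'' against which to compare your attempt. What you have written is therefore being judged on its own merits as a self-contained argument, and on those terms it does not succeed: you correctly sketch the Bollob\'as--Thomason hub-and-fans strategy (minimal counterexample, degree reductions, a Mader-style dense core $H$, Menger fans from the terminals into $H$, and a pairing inside $H$), correctly observe that this only delivers $\kappa(G)\geq 22r$, and then explicitly defer the entire tightening from $22r$ to $10r$ to ``a careful edge-count \ldots{} in the spirit of a discharging argument'' that you do not carry out and that you yourself identify as ``where the real work of the Thomas--Wollan refinement is concentrated.'' That is not a proof with a small gap; it is an outline of a different (weaker) theorem plus an acknowledgment that the actual statement is left unproven.

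Two further cautions. First, the Thomas--Wollan improvement is not really a discharging argument layered on top of the hub picture; their paper proves the sharper statement that every $2k$-connected graph with at least $5k\,|V(G)|$ edges is $k$-linked, via an induction that analyzes small rigid separations of a minimal counterexample, and the $10r$-connectivity version is a corollary (a $10r$-connected graph has minimum degree $\geq 10r$, hence $\geq 5r\,|V(G)|$ edges, and is certainly $2r$-connected). So even the architecture you are gesturing at is off. Second, a minor point: in the minimality reduction, contracting ``a terminal edge'' to reduce $r$ only makes sense for an edge $x_iy_i$ within a prescribed pair; edges between terminals from different pairs need a separate treatment, and in any case one ordinarily just takes $x_iy_i$ as $P_i$ rather than contracting. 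None of this can be fixed by polishing the writeup; the core extremal argument simply is not present.
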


We prove a \emph{spanning variation} of Theorem~\ref{th::BT}. It may potentially be used to extend a partial embedding of some spanning graph; in the present paper, it is used to extend a matching to a Hamilton cycle.

\begin{theorem} \label{lem::disjointPaths}
Let $G = (V,E)$ be an $n$-vertex graph and let $x_1, y_1, \ldots, x_r, y_r$ be $2r$ distinct vertices of $G$. There exists a constant $c > 0$ such that if $\kappa(G) \geq c \max \{\alpha(G), \log n, r\}$, then $G$ admits pairwise vertex-disjoint paths $P_1, \ldots, P_r$ such that $V(P_1) \cup \ldots \cup V(P_r) = V$ and, for every $i \in [r]$, the endpoints of $P_i$ are $x_i$ and $y_i$.
\end{theorem}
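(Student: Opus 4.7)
I would adopt a \emph{link-then-absorb} strategy: first use Theorem~\ref{th::BT} (Thomas--Wollan) to build an initial, non-spanning system of pairwise disjoint $x_i$--$y_i$ paths, then grow the paths one vertex at a time by P\'osa-style rotations and insertions, exploiting the hypothesis $\kappa(G) \geq c\max\{\alpha(G), \log n, r\}$ to make each of the three ingredients (base linkage, rotation boosting, final endpoint hit) go through separately.

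\textbf{Step 1 (initial linkage).} Since $\kappa(G) \geq c r \geq 10 r$, Theorem~\ref{th::BT} produces pairwise vertex-disjoint paths $Q_1, \ldots, Q_r$ with $Q_i$ joining $x_i$ to $y_i$. These need not span $V$; let $W := V \setminus \bigcup_i V(Q_i)$. Our task is to insert every vertex of $W$ into some $Q_i$ while preserving pairwise disjointness and the prescribed endpoints.

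\textbf{Step 2 (absorption with rotations).} Process $W$ one vertex at a time. To absorb $w \in W$ into some $Q_i$, search for a \emph{consecutive neighbour pair} $(u,v)$ with $uv \in E(Q_i)$ and $uw, wv \in E(G)$; if such a pair exists, replace the edge $uv$ by the length-two subpath $u w v$. When no consecutive neighbour pair is immediately available, apply short P\'osa rotations to the current linkage to expose one. A standard rotation argument --- using $\kappa(G) \geq c \log n$ to generate exponentially many endpoint alternatives, and $\kappa(G) \geq c\alpha(G)$ to force one of them to be a neighbour of $w$ via a Chv\'atal--Erd\H{o}s style cut (i.e., if no rotation endpoint is adjacent to $w$, the rotation endpoints together with $w$ produce an independent set larger than $\alpha(G)$) --- shows that, for each $w$, the set of potential insertion positions reachable by $O(1)$ rotations has size $\Omega(\kappa(G))$, which exceeds $\alpha(G)$; hence at least one of them is an actual absorbing pair and the insertion succeeds.

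\textbf{Main obstacle.} The delicate point is that rotations performed on $Q_i$ can disturb the other paths $Q_j$, and repeated insertions could erode the connectivity needed for future rotations. To manage this, I would set aside a small reservoir $R \subseteq V \setminus \{x_i, y_i : i \in [r]\}$ of \emph{flex vertices} of size $\Theta(\alpha(G) + \log n + r)$, and restrict each rotation so that its altered segment touches only $R$; in this way the rotations affect only a controlled portion of each path, and the other $Q_j$'s stay intact. This three-way reservation is exactly why the hypothesis beats each of $\alpha(G)$, $\log n$, and $r$ separately rather than merely their product. Should the rotation bookkeeping become too intricate, a reasonable fallback is to first extract, via Theorem~\ref{th::Mader} and Lemma~\ref{lem::highConnectivity}, a highly connected subgraph containing all the $x_i, y_i$ and run the absorption inside it, afterwards folding in the few remaining vertices greedily by exploiting their large degree into the bulk of the linkage.
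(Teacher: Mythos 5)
Step~2, the absorption of $W$ by rotations, is the crux of the statement and you do not actually carry it out. Rotations with \emph{both} endpoints of each $Q_i$ pinned (necessary to preserve the prescribed $x_i, y_i$) are substantially weaker than standard P\'osa rotation, and the claim that each $w$ has $\Omega(\kappa(G))$ reachable insertion positions after $O(1)$ rotations is unsupported. It is, in fact, incompatible with your own reservoir device: once you insist that rotated segments live inside a set $R$ of size $\Theta(\alpha(G)+\log n+r)$, the number of rotation endpoints is bounded by $|R|$, which is of the same order as the \emph{lower bound} on $\kappa(G)$ and hence cannot be $\Omega(\kappa(G))$ once $c$ is large or $\kappa(G)$ exceeds that threshold. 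So the Chv\'atal--Erd\H{o}s style ``the alternatives form a too-large independent set'' punchline does not follow from the constrained rotations, and the greedy ``fold in remaining vertices'' fallback is equally unargued. You have correctly identified the ingredients (Thomas--Wollan, and $\kappa > \alpha$ as an absorption condition) but not a mechanism that makes them interact.

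The paper's proof sidesteps rotation bookkeeping entirely by making all of the absorption happen inside a \emph{single} Hamilton path. Delete the $2r$ prescribed vertices, split the rest into $S_1 \cup S_2$ by independent coin flips, and use Theorem~\ref{th::ConSplit} together with a Chernoff bound to fix a split in which both $G[S_j]$ retain connectivity $\Omega(\kappa(G))$ and every vertex of $G$ has at least $\kappa(G)/4$ neighbours in each side. Link the first $r-1$ pairs inside $G_1 := G[S_1 \cup \{x_1,y_1,\ldots,x_{r-1},y_{r-1}\}]$ via Theorem~\ref{th::BT}; these paths need not span anything. The leftover graph $G_2 := G \setminus \bigcup_{i<r} V(P_i)$ contains $S_2 \cup \{x_r,y_r\}$, and by Observation~\ref{obs::kCon} its connectivity is still $\Omega(\kappa(G)) > \alpha(G_2)$, so Theorem~\ref{th::HamConCE} yields a Hamilton $x_r$--$y_r$ path in $G_2$. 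That one path absorbs every vertex not yet used, with no rotations and no reservoir. Your plan would need a genuinely new rotation lemma for multi-path systems with fixed endpoints to close the gap.
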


\begin{proof}
Let $G' = G \setminus \{x_1, y_1, \ldots, x_r, y_r\}$; note that $\kappa(G') \geq \kappa(G) - 2r = \Theta(\kappa(G))$, where the equality holds since $\kappa(G) \geq c r$ and $c$ can be chosen to be sufficiently large. Let $S$ be a randomly sampled subset of $V(G')$, where each vertex $v \in V(G')$ is included in $S$ independently with probability $1/2$. Since $\kappa(G) \geq c \log n$ and $c$ can be chosen to be sufficiently large, it follows by Theorem~\ref{th::ConSplit} that a.a.s. $\kappa(G[S]) = \Omega(\kappa(G')) = \Omega(\kappa(G))$. Additionally, by Chernoff's inequality (see, e.g., Corollary 21.7 in~\cite{FKbook}), the probability that there exists a vertex $u \in V(G)$ satisfying $\deg_G(u, S) < (\deg_G(u) - 2r)/3$ is at most $n \exp \{- \Omega(\delta(G) - 2r)\} $; owing to $\delta(G) \geq \kappa(G) \geq c \max \{\log n, r \}$ and to $c$ being sufficiently large, this probability is $o(1)$. The same two claims hold for $V(G') \setminus S$ as well, as it is also a randomly sampled subset of $V(G')$, where each vertex is included in it independently with probability $1/2$. It follows that there exists a partition $S_1 \cup S_2$ of $V(G')$ such that for $j \in \{1,2\}$ we have
\begin{description}
\item [(i)] $\kappa(G[S_j]) = \Omega(\kappa(G))$ and

\item [(ii)] $\deg_G(u, S_j) \geq \kappa(G)/4$ for every $u \in V(G)$.
\end{description}

\begin{figure}[ht] 
	\centering
	\includegraphics[scale = 0.31]{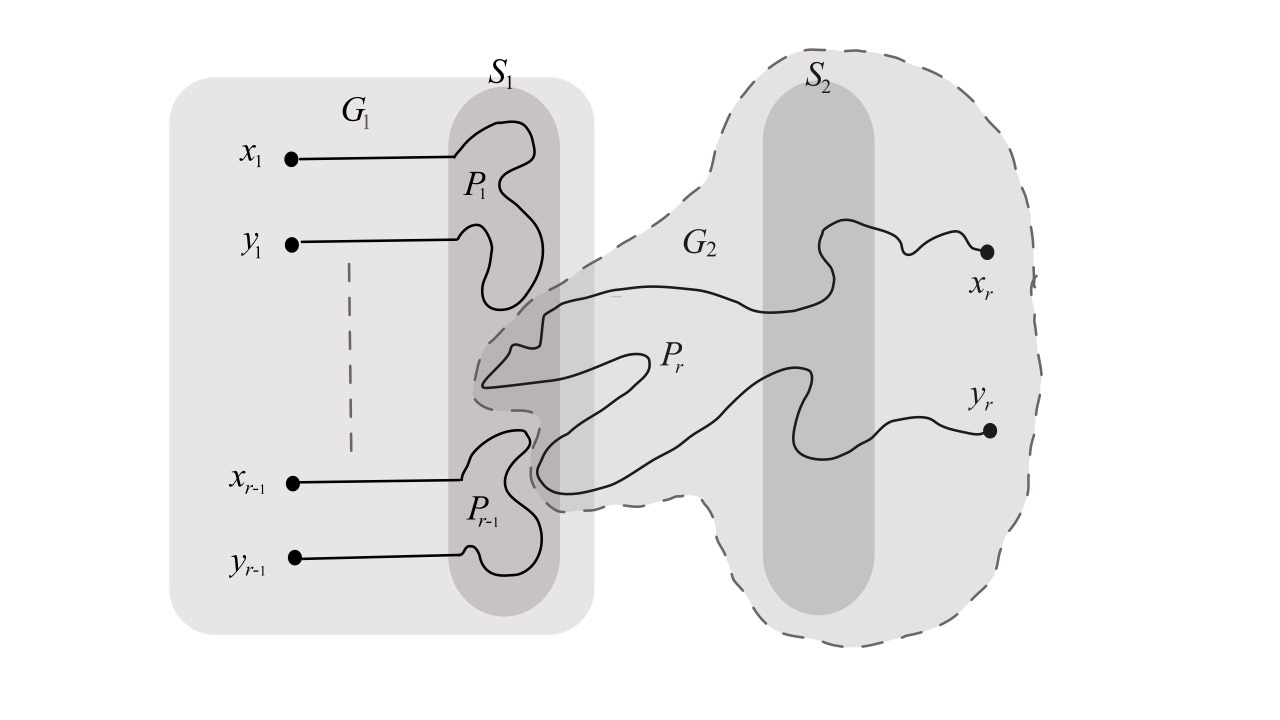}

	\caption{The pairs in $\{\{x_i, y_i\} : i \in [r-1]\}$ are connected via a collection of pairwise disjoint paths in $G_1$. Subsequently, $x_r$ and $y_r$ are connected via a Hamilton path of $G_2$.}
	\label{fig::pathsPartition}
\end{figure}

Set $G_1 = G[S_1 \cup \{x_1, y_1, \ldots, x_{r-1}, y_{r-1}\}]$ and note that $\kappa(G_1) = \Omega(\kappa(G))$ holds by properties (i) and (ii) and by Observation~\ref{obs::kCon}. Moreover, since $\kappa(G) \geq c r$ and $c$ is sufficiently large, it follows by Theorem~\ref{th::BT} that $G_1$ admits pairwise vertex-disjoint paths $P_1, \ldots, P_{r-1}$ such that, for every $i \in [r-1]$, the endpoints of $P_i$ are $x_i$ and $y_i$. Let $G_2 = G \setminus (V(P_1) \cup \ldots \cup V(P_{r-1}))$ and note that $S_2 \cup \{x_r, y_r\} \subseteq V(G_2)$. It then follows by properties (i) and (ii) and by Observation~\ref{obs::kCon}, that $\kappa(G_2) = \Omega(\kappa(G))$. Since, additionally, $\kappa(G) \geq c \alpha(G)$ and $c$ is sufficiently large, it follows by Theorem~\ref{th::HamConCE} that $G_2$ admits a Hamilton path $P_r$ with endpoints $x_r$ and $y_r$ (see Figure~\ref{fig::pathsPartition}).
\end{proof}

\section{Pancyclicity} \label{sec::smallAlphaPan}

\begin{proof} [Proof of Theorem~\ref{th::smallAlpha}]
Starting with (a), let $R \sim {\mathbb G}(n,p)$, where $p := p(n) \geq \max \left\{\omega \left(n^{-2} \right),  \frac{c_3 \log(1/\delta)}{\delta n^2} \right\}$. We first prove that $G \cup R$ is a.a.s. Hamiltonian. If $\delta = \Theta(1)$, then since $p = \omega \left(n^{-2} \right)$, our claim follows by Theorem~\ref{th::BFMalpha}. Assume then that $\delta = o(1)$. Applying Lemma~\ref{lem::highConnectivity} we obtain a partition $V(G) = V_1 \cup \ldots \cup V_t$ such that, for every $i \in [t]$, the subgraph $G[V_i]$ is $(\delta^2 n/16)$-connected and $|V_i| \geq \delta n/8$. 

For every $i \in [t]$, let $A_{i1} \cup \ldots \cup A_{i m_i}$ be an arbitrary partition of $V_i$ such that $\delta n/16 \leq |A_{ij}| \leq \delta n/8$ for every $j \in [m_i]$. Let $m = \sum_{i=1}^t m_i$ denote the total number of sets $A_{ij}$; note that $8/\delta \leq m \leq 16/\delta$. For every $i \in [t]$ and $j \in [m_i]$, let $B^1_{ij} \cup B^2_{ij}$ be an arbitrary partition of $A_{ij}$ satisfying $|B^1_{ij}| \leq |B^2_{ij}| \leq |B^1_{ij}| + 1$. For all distinct pairs $(i_1, j_1) \in [t] \times [m_{i_1}]$ and $(i_2, j_2) \in [t] \times [m_{i_2}]$, it holds that  
\begin{align} \label{eq::oneEdge}
\mathbb{P} \left(E_R \left(B^1_{i_1 j_1}, B^2_{i_2 j_2} \right) \neq \O \right) = 1 - (1-p)^{\left|B^1_{i_1 j_1} \right| \left|B^2_{i_2 j_2} \right|} \geq 1 - \exp \left\{- \frac{p \delta^2 n^2}{33^2}\right\} \geq \frac{2 \log m}{m},
\end{align}
where the last inequality holds since $m \geq 8/\delta = \omega(1)$, $p \geq \frac{c_3 \log(1/\delta)}{\delta n^2}$ for a sufficiently large constant $c_3$, and $e^{-x} \approx 1-x$ holds whenever $x$ tends to $0$.

Consider the auxiliary random directed graph $D$ with vertex-set $\{v_{ij} : i \in [t], j \in [m_i]\}$ such that $(v_{i_1 j_1}, v_{i_2 j_2})$ is an arc of $D$ if and only if $E_R\left(B^1_{i_1 j_1}, B^2_{i_2 j_2} \right) \neq \O$. It follows by~\eqref{eq::oneEdge} and by Theorem~\ref{th::McDiarmid} that $D$ is a.a.s. Hamiltonian; let $C$ be a directed Hamilton cycle of $D$. By construction, for every $i \in [t]$ and every $j \in [m_i]$, there are (two distinct) vertices $x_{ij} \in B^1_{ij}$ and $y_{ij} \in B^2_{ij}$ which correspond to the vertex $v_{ij} \in C$, that is, some edge of $R$ which is incident with $y_{ij}$ corresponds to the arc of $C$ that enters $v_{ij}$ and some edge of $R$ which is incident with $x_{ij}$ corresponds to the arc of $C$ that exits $v_{ij}$.

\begin{figure}[ht] 
	\centering
	\includegraphics[scale = 0.35]{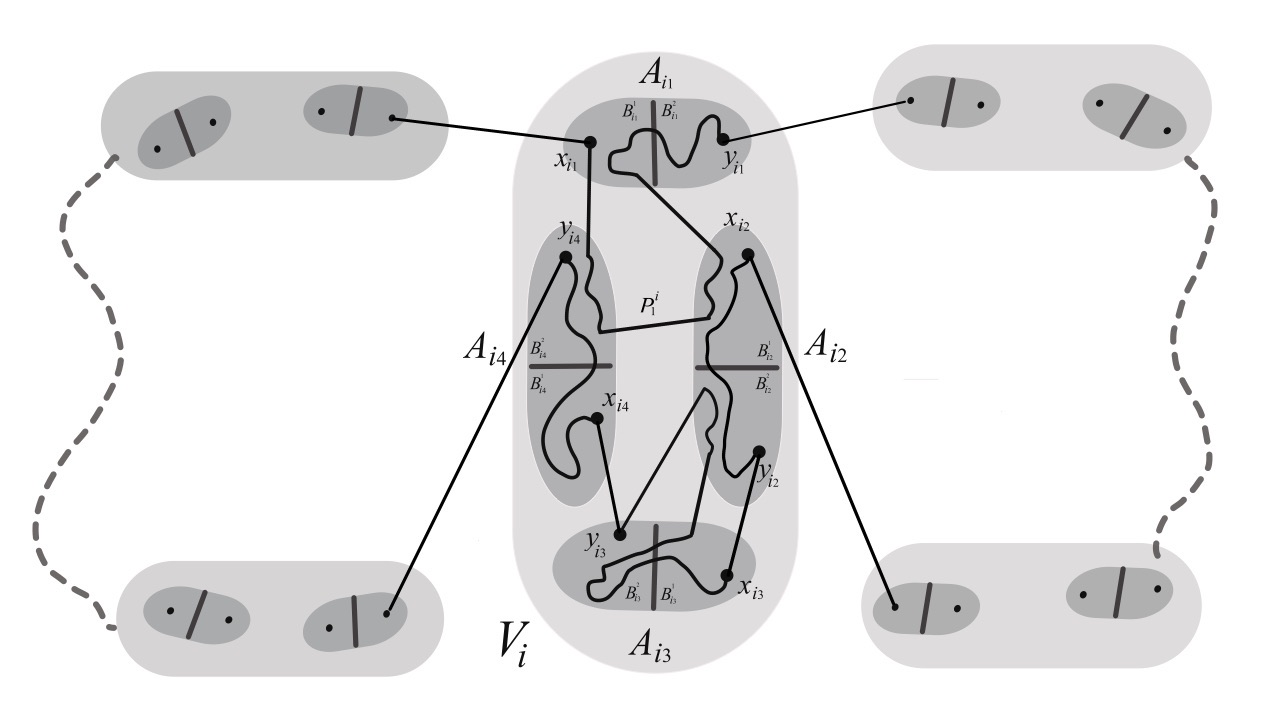}

	\caption{The directed Hamilton cycle $C$ of $D$ is translated into a Hamilton cycle of $G \cup R$. For every $i \in [t]$, the pairs in $\{\{x_{ij}, y_{ij}\} : j \in [m_i]\}$ are connected via a collection of pairwise disjoint paths that span $V_i$.}
	\label{fig::HamCycle}
\end{figure}

Let $c$ be the constant whose existence is ensured by Theorem~\ref{lem::disjointPaths}. Fix any $i \in [t]$. Note that $\kappa(G[V_i]) \geq \delta^2 n/16 \geq c \max \{\alpha(G[V_i]), \log n, m_i\}$ holds by the premise of the theorem for appropriately chosen constants $c_1$ and $c_2$, and since $m_i \leq m \leq 16/\delta$ and $\delta \geq c_1 n^{- 1/3}$. It thus follows by Theorem~\ref{lem::disjointPaths} that, for every $i \in [t]$, there are pairwise vertex-disjoint paths $P^i_1, \ldots, P^i_{m_i}$ such that $V(P^i_1) \cup \ldots \cup V(P^i_{m_i}) = V_i$ and, for every $j \in [m_i]$, the endpoints of $P^i_j$ are $x_{ij}$ and $y_{ij}$. Replacing every vertex $v_{ij}$ in $C$ with the corresponding path $P^i_j$ and replacing every arc of $C$ with the corresponding edge of $R$ yields a Hamilton cycle of $G \cup R$ (see Figure~\ref{fig::HamCycle}). Finally, since $\alpha(G \cup R) \leq \alpha(G) \leq \delta n/600 \leq \delta(G \cup R)/600$ holds for sufficiently small $c_2$, it follows by Theorem~\ref{th::deltaVsAlpha} that $G \cup R$ is in fact a.a.s. pancyclic.


\bigskip

Next, we prove (b). Let $H$ be the disjoint union of $K_{\lfloor n/2 \rfloor}$ and $K_{\lceil n/2 \rceil}$. Note that $\delta(H) = \lfloor n/2 \rfloor - 1$ and $\alpha(H) = 2$. Moreover, for any constant $c > 0$, if $p = c n^{-2}$, then the probability that $R \sim {\mathbb G}(n,p)$ has no edges is bounded away from 0. Hence, the probability that $H \cup R = H$ is disconnected and thus, in particular, not Hamiltonian, is bounded away from 0.

\bigskip

Finally, we prove (c). Let $\Omega(n^{- 1/3}) = \delta := \delta(n) = o(1)$. Let $H$ be an $n$-vertex graph consisting of $k = (1 + o(1)) \delta^{-1}$ pairwise disjoint cliques $Q_1, \ldots, Q_k$ satisfying $\delta n + 1 \leq |V(Q_1)| \leq \ldots \leq |V(Q_k)| \leq |V(Q_1)| + 1$. It follows that $\delta(H) \geq \delta n$ and that $\alpha(H) = k = (1 + o(1)) \delta^{-1} = O(\delta^2 n)$. Let $R \sim {\mathbb G}(n,p)$, where $p := p(n) \leq \frac{c_4 \log(1/\delta)}{\delta n^2}$. Consider the auxiliary random graph $G = (V,E)$, where $V = \{u_1, \ldots, u_k\}$ and $u_i u_j \in E$ if and only if $E_R(V(Q_i), V(Q_j)) \neq \O$. Hence, for every two distinct indices $i, j \in [k]$, the probability that $u_i u_j \in E$ is 
$$
1 - (1-p)^{|V(Q_i)||V(Q_j)|} \leq 1 - \left(1 - p |V(Q_i)||V(Q_j)| \right) \leq \frac{c_4 \log(1/\delta)}{\delta n^2} (2 \delta n)^2 \leq \frac{\log k}{2k},
$$
where the first inequality holds by Bernoulli's inequality, the second inequality holds since $p \leq \frac{c_4 \log(1/\delta)}{\delta n^2}$, and the last inequality holds for a sufficiently small constant $c_4$ since $\delta = (1 + o(1)) k^{-1}$ and $k = \omega(1)$. It thus follows by a classical result of Erd\H{o}s and R\'enyi~\cite{ER} that a.a.s. $G$ admits an isolated vertex. It follows, by construction, that a.a.s. there exists some $i \in [k]$ such that $E_R(V(Q_i), V(H) \setminus V(Q_i)) = \O$. We conclude that a.a.s. $H \cup R$ is disconnected and, in particular, not Hamiltonian.   
\end{proof}

\begin{proof} [Proof of Theorem~\ref{th::smallDelta}]
Starting with (a), Let $c$ be the constant whose existence is ensured by Theorem~\ref{lem::disjointPaths}. Let $V_1 \cup \ldots \cup V_t$ be a partition of $V$ as in the statement of Lemma~\ref{lem::partition}. Let 
$$
S = \left\{i \in [t] : |V_i| < \frac{n}{100 \alpha(G) \log n} \right\}.
$$ 
If $S \neq \O$, then assume without loss of generality (by relabelling) that $S = [s]$ for some $s \in [t]$. For every $i \in [s]$, let $B_i^1 \cup B_i^2$ be a partition of $V_i$ such that $|B_i^1| \leq |B_i^2| \leq |B_i^1| + 1$. For every $i \in [t] \setminus [s]$, let $A_i^1 \cup \ldots \cup A_i^{m_i}$ be an arbitrary equipartition (that is, $|A_i^1| \leq \ldots \leq |A_i^{m_i}| \leq |A_i^1| + 1$) of $V_i$, where $m_i$ is the smallest positive integer for which $|A_i^j| \leq \frac{n}{100 \alpha(G) \log n}$ for every $j \in [m_i]$. Note that $|A_i^j| \geq \frac{n}{200 \alpha(G) \log n}$ for every $i \in [t] \setminus [s]$ and every $j \in [m_i]$; in particular, $m_i \leq 200 \alpha(G) \log n$ for every $i \in [t] \setminus [s]$. Let $L = \{A_i^j : i \in [t] \setminus [s], j \in [m_i]\}$.


Let $q = 2 s$ and let $X_1, \ldots, X_q$ be an ordering of the sets in $\{B_i^k : i \in [s], k \in \{1,2\}\}$ such that $X_{2i-1} = B_i^1$ and $X_{2i} = B_i^2$ for every $i \in [s]$. Note that $|X_i| \geq \delta(G)/(2 \log n)$ holds for every $i \in [q]$ by Lemma~\ref{lem::partition}(iii). Let $Y_1, \ldots, Y_{q'}$ be an arbitrary ordering of the sets in $L$; note that $|Y_i| \geq \frac{n}{200 \alpha(G) \log n}$ holds for every $i \in [q']$. Additionally, $q' \geq 50 \alpha(G) \log n \geq 2t \geq q$ holds by the definitions of $S$ and $L$ and by assertions (i) and (ii) of Lemma~\ref{lem::partition}. Let $R \sim \mathbb{G}(n,p)$, where $p := p(n) \geq \frac{c_2 \log n \log(\alpha(G) \log n)}{n \delta(G)}$ for a sufficiently large constant $c_2$. Exposing first only the edges of $R$ with one endpoint in $\bigcup_{i=1}^q X_i$ and the other in $\bigcup_{j=1}^{q'} Y_j$, consider the auxiliary random bipartite graph $H$ with parts $X = \{x_1, \ldots, x_q\}$ and $Y = \{y_1, \ldots, y_{q'}\}$, where for every $i \in [q]$ and $j \in [q']$ there is an edge of $H$ connecting $x_i$ and $y_j$ if and only if $E_R(X_i, Y_j) \neq \O$. For every $i \in [q]$ and $j \in [q']$ it holds that
$$
\mathbb{P}(E_R(X_i, Y_j) \neq \O) = 1 - (1-p)^{|X_i||Y_j|} \geq 1 - \exp \left\{- p \frac{n \delta(G)}{400 \alpha(G) (\log n)^2} \right\} \geq \frac{2 \log q'}{q'},
$$
where the last inequality holds since $q' \geq 50 \alpha(G) \log n$, $p \geq \frac{c_2 \log n \log(\alpha(G) \log n)}{n \delta(G)}$ for a sufficiently large constant $c_2$, and $e^{-x} \approx 1-x$ holds whenever $x$ tends to $0$. Since, moreover, $q' \geq q$, it follows by Theorem~\ref{th::perfectMatchingBnnp} that a.a.s. $H$ admits a matching of size $q$; conditioning on this event, assume without loss of generality that ${\mathcal M} = \{x_i y_i : i \in [q]\}$ is such a matching. For every $i \in [q]$, let $u_i \in X_i$ and $v_i \in Y_i$ be vertices for which $u_i v_i \in E(R)$.

\begin{figure}[ht] 
	\centering
	\includegraphics[scale = 0.36]{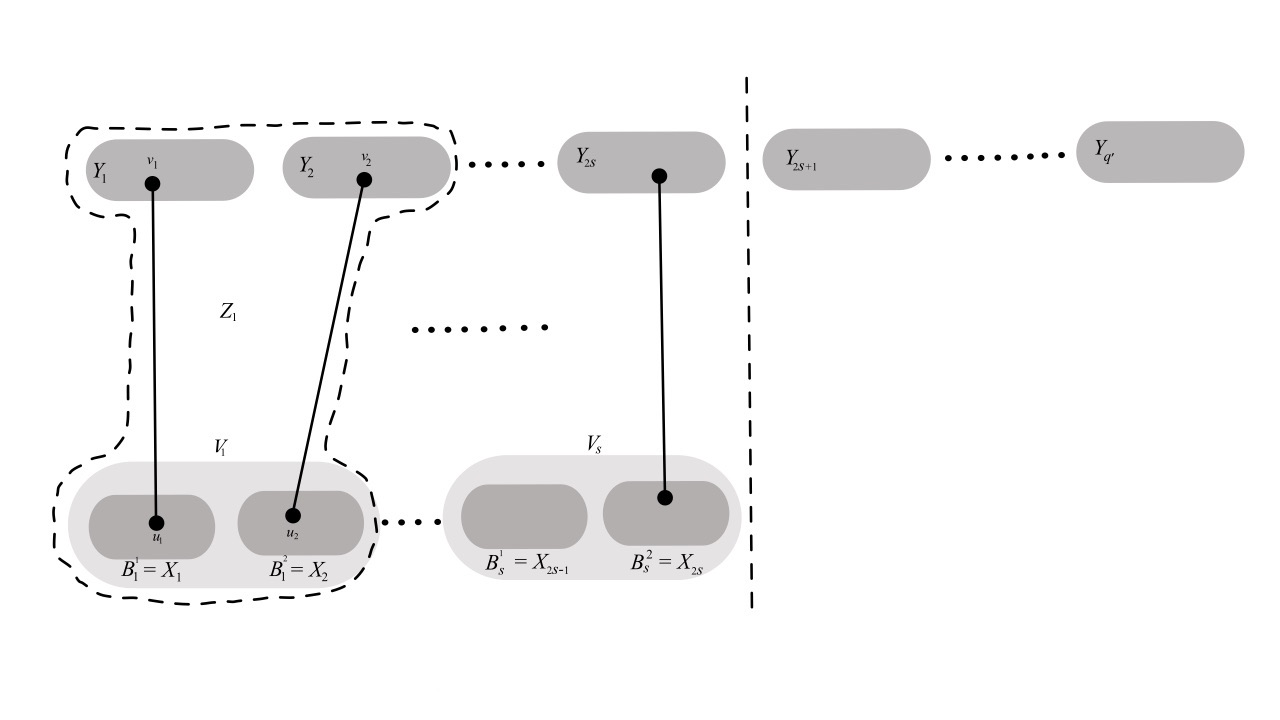}

	\caption{The matching ${\mathcal M}$, the vertices $u_i$ and $v_i$, and the sets $Z_i$ for $i \in [s]$.}
	\label{fig::matching}
\end{figure}

Let $\ell = q' - s$. For every $i \in [\ell]$, define sets $Z_i, L_i$ and $R_i$ as follows. If $i \in [s]$, then $Z_i := Y_{2i-1} \cup X_{2i-1} \cup X_{2i} \cup Y_{2i}$ (see Figure~\ref{fig::matching}), $L_i := Y_{2i-1} \setminus \{v_{2i-1}\}$, and $R_i := Y_{2i} \setminus \{v_{2i}\}$; otherwise $Z_i := Y_{i + s}$ and $L_i \cup R_i$ is an arbitrary partition of $Z_i$ such that $|L_i| \leq |R_i| \leq |L_i| + 1$. Exposing the remaining edges of $R$, for every two distinct indices $i, j \in [\ell]$, it holds that
\begin{align} \label{eq::oneEdgeDHam}
\mathbb{P}(E_R(R_i, L_j) \neq \O) = 1 - (1-p)^{|R_i||L_j|} \geq 1 - \exp \left\{-p \left(\frac{n}{400 \alpha(G) \log n} \right)^2 \right\} \geq \frac{2 \log \ell}{\ell},
\end{align}
where the last inequality holds since $\ell \geq 30 \alpha(G) \log n$, $\alpha(G) \delta(G) = O(n)$, $p \geq \frac{c_2 \log n \log(\alpha(G) \log n)}{n \delta(G)}$ for a sufficiently large constant $c_2$, and $e^{-x} \approx 1-x$ holds whenever $x$ tends to $0$. 

Consider the auxiliary random directed graph $D$ with vertex-set $\{z_i : i \in [\ell]\}$ such that $(z_i, z_j)$ is an arc of $D$ if and only if $E_R\left(R_i, L_j \right) \neq \O$. It follows by~\eqref{eq::oneEdgeDHam} and by Theorem~\ref{th::McDiarmid} that $D$ is a.a.s. Hamiltonian; let $C$ be a directed Hamilton cycle of $D$. By construction, for every $i \in [\ell]$ there are vertices $a_i \in R_i$ and $b_i \in L_i$ which correspond to the vertex $z_i \in C$, that is, some edge of $R$ which is incident with $b_i$ corresponds to the arc of $C$ that enters $z_i$ and some edge of $R$ which is incident with $a_i$ corresponds to the arc of $C$ that exits $z_i$.

For every $i \in [s]$, it follows by Lemma~\ref{lem::partition}(iv) and by the premise of the theorem that $\kappa(G[V_i]) \geq \frac{\delta(G)}{20 \alpha(G) \log n} > \alpha(G) \geq \alpha(G[V_i])$. It thus follows by Theorem~\ref{th::HamConCE} that for every $i \in [s]$ the graph $G[V_i]$ admits a Hamilton path $Q_i$ whose endpoints are $u_{2i-1}$ and $u_{2i}$. 

Next, fix an arbitrary $i \in [t] \setminus [s]$. For every $j \in [m_i]$, let $r_j \in [q']$ be the unique integer for which $A_i^j = Y_{r_j}$. Note that $\kappa(G[V_i]) \geq \frac{\delta(G)}{20 \alpha(G) \log n} \geq c \max \{\alpha(G), \log n, m_i\}$ holds by Lemma~\ref{lem::partition}(iv), by the premise of the theorem for a sufficiently large constant $c_1$, and since $m_i \leq 200 \alpha(G) \log n$. It thus follows by Theorem~\ref{lem::disjointPaths} that there are pairwise vertex-disjoint paths $P_i^1, \ldots, P_i^{m_i}$ such that $V(P_i^1) \cup \ldots \cup V(P_i^{m_i}) = V_i$, and for every $j \in [m_i]$, the endpoints of $P_i^j$ are 
\begin{description}
\item [(1)] $a_{r_j}$ and $v_{r_j}$ if $r_j \in [q]$ is even.

\item [(2)] $b_{r_j}$ and $v_{r_j}$ if $r_j \in [q]$ is odd.

\item [(3)] $a_{r_j}$ and $b_{r_j}$ if $r_j \in [q'] \setminus [q]$.
\end{description}
Replacing every vertex $z_i$ in $C$, where $i \in [q]$, with the corresponding path 
$$
b_{2i-1} P^c_d v_{2i-1} u_{2i-1} Q_i u_{2i} v_{2i} P^e_f a_{2i}
$$
(see Figure~\ref{fig::pathCompositeVertex}), replacing every vertex $z_i$ in $C$, where $i \in [q'] \setminus [q]$, with the corresponding path $P^r_j$, and replacing every arc of $C$ with the corresponding edge of $R$, yields a Hamilton cycle of $G \cup R$.

\begin{figure}[ht] 
	\centering
	\includegraphics[scale = 0.35]{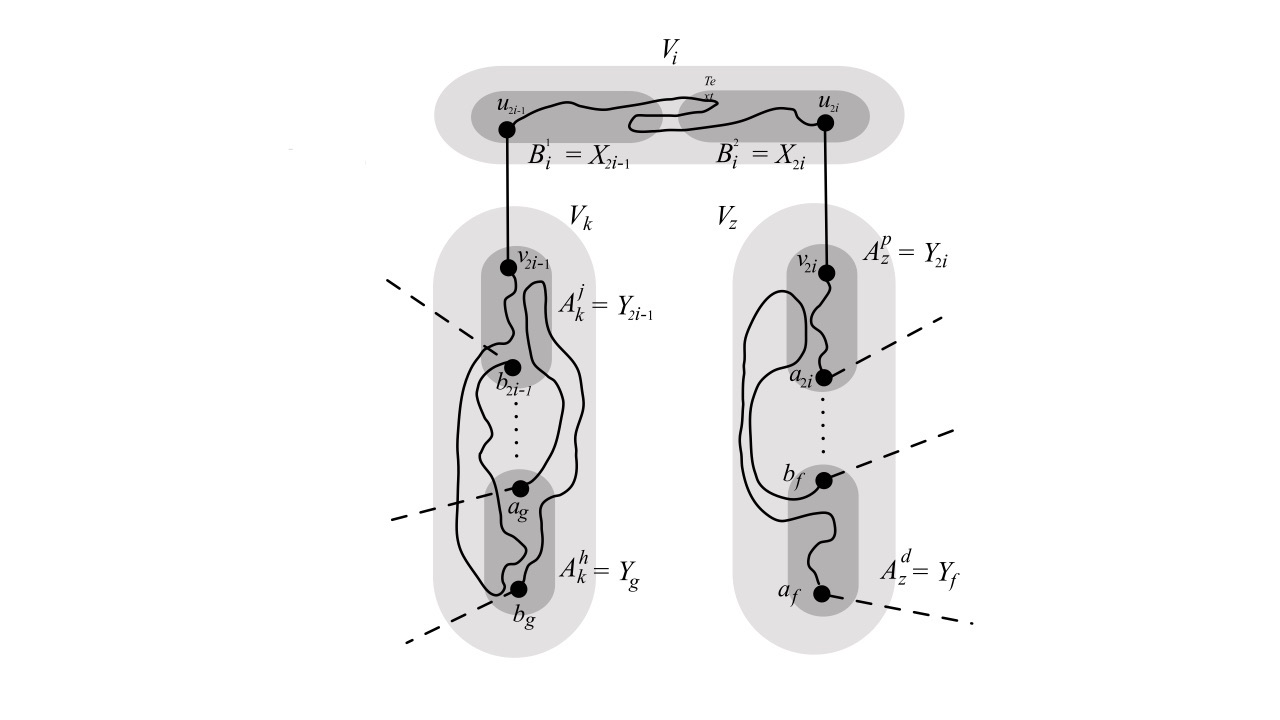}

	\caption{The Hamilton path of $G[V_i]$, where $i \in [s]$, with endpoints $u_{2i-1}$ and $u_{2i}$ is $Q_i$. Moreover, $Y_{2i-1} \subseteq V_k$ and $Y_{2i} \subseteq V_z$ for some, not necessarily distinct, indices $k, z \in [t] \setminus [s]$. }
	\label{fig::pathCompositeVertex}
\end{figure}

Finally, since $\alpha(G \cup R) \leq \alpha(G) = o(\delta(G)) = o(\delta(G \cup R))$, it follows by Theorem~\ref{th::deltaVsAlpha} that $G \cup R$ is in fact a.a.s. pancyclic. 

\bigskip


Next, we prove (b). Let $d := d(n) \geq 1$ and $k := k(n) = \omega_n(1)$ be integers satisfying $k (d+1) \leq n$. Let $H$ be a graph consisting of $k$ pairwise disjoint cliques $Q_1, \ldots, Q_k$ satisfying $|V(Q_1)| = \ldots = |V(Q_{k-1})| = d + 1$ and $|V(Q_k)| = n - (k-1)(d+1) \geq d+1$. It follows that $|V(H)| = n$, $\delta(H) = d$, and $\alpha(H) = k$. Let $R \sim {\mathbb G}(n,p)$, where $p \leq \frac{c_3 \log k}{d n}$. Consider the auxiliary random graph $G = (V,E)$, where $V = \{u_1, \ldots, u_k\}$ and $u_i u_j \in E$ if and only if $E_R(V(Q_i), V(Q_j)) \neq \O$. For every $j \in [k-1]$, let $I_j$ be the indicator random variable for the event ``$u_j$ is isolated in $G$'' and let $X = \sum_{j=1}^{k-1} I_j$. Standard calculations show that $\mathbb{E}(X) = \omega_n(1)$ and that $\textrm{Var}(X) = o \left((\mathbb{E}(X))^2 \right)$. It thus follows by the second moment method that a.a.s. $G$ admits an isolated vertex. It then follows, by construction, that a.a.s. there exists some $i \in [k-1]$ such that $E_R(V(Q_i), V(H) \setminus V(Q_i)) = \O$. We conclude that a.a.s. $H \cup R$ is disconnected and, in particular, not Hamiltonian.  
\end{proof}

\section{Toughness} \label{sec::toughness}

\begin{proof} [Proof of Theorem~\ref{th::toughHam}]

Starting with (a), we expose $R \sim \mathbb{G}(n, p)$ in two rounds, that is, $R = R_1 \cup R_2$, where $R_i \sim \mathbb{G}(n, q)$ for $i \in \{1,2\}$ and $q$ satisfying $(1-q)^2 = 1-p$; note that $q \geq p/2$. Assuming that $c_2 \leq 1/600$, it follows by Theorem~\ref{th::deltaVsAlpha} that if $G \cup R$ is Hamiltonian, then it is in fact pancyclic. Hence, it remains to prove that $G \cup R$ is a.a.s. Hamiltonian. Our aim is to prove that $G \cup R$ is a.a.s. $t_0$-tough, that is, that a.a.s., for every $S \subseteq V(G)$, the number of connected components of $(G \cup R) \setminus S$ is at most $\max \{1, |S|/t_0\}$.  Let $S \subseteq V(G)$ be an arbitrary set, and let $r$ denote the number of connected components of $G \setminus S$; note that the number of connected components of $(G \cup R) \setminus S$ is at most $r$. If $|S| \geq k/2$, then $r \leq \alpha(G \setminus S) \leq \alpha(G) \leq c_2 k \leq |S|/t_0$, where the last inequality holds for $c_2 \leq \frac{1}{2 t_0}$. Assume then that $|S| \leq k/2$. Since $\delta(G) = k$, it follows that $\delta(G \setminus S) \geq k/2$. Hence $r \leq \frac{n - |S|}{\delta(G \setminus S) + 1} \leq \frac{2n}{k}$. We may thus assume that $|S| \leq \frac{2 n t_0}{k}$. Fix an arbitrary integer $1 \leq s \leq 2 n t_0/k$ and an arbitrary subset $S \subseteq V(G)$ of size $s$.

\medskip

Expose $R_1$. Let $C_1, \ldots, C_r$ denote the connected components of $G \setminus S$, where $|V(C_1)| \leq \ldots \leq |V(C_r)|$. We claim that with very high probability (properly quantified below) $(G \cup R_1) \setminus S$ admits a connected component spanning at least $n/3$ vertices. Indeed, otherwise $|V(C_1)| \leq \ldots \leq |V(C_r)| \leq n/3$ and thus there exists an index $j \in [r]$ such that $\sum_{i=1}^j |V(C_i)| \geq n/3$ and $\sum_{i=j+1}^r |V(C_i)| \geq n/3$. Hence, the probability that the order of every connected component of $(G \cup R_1) \setminus S$ is smaller than $n/3$ is at most 
\begin{align} \label{eq::largeComp}
(1-q)^{n/3 \cdot n/3} = o \left(n \binom{n}{s} \right)^{-1},
\end{align}
where the equality holds since $s \leq 2 n t_0/k$ and $q \geq p/2 \geq \frac{c_1 \log n}{2 n k}$ for a sufficiently large constant $c_1 := c_1(t_0)$.

\medskip

Suppose now that $(G \cup R_1) \setminus S$ has a connected component $C$ such that $|V(C)| \geq n/3$. Expose $R_2$. If $(G \cup R) \setminus S$ has more than $r' := \max \{1, s/t_0\}$ connected components, then there are connected components $Q_1, \ldots, Q_{r'}$ of $(G \cup R) \setminus (S \cup C)$ such that $E_{R_2}(V(C), V(Q_1) \cup \ldots \cup V(Q_{r'})) = \emptyset$. The probability of this event is at most
\begin{align} \label{eq::manyComp}
\binom{r}{r'} (1-q)^{n/3 \cdot \sum_{i=1}^{r'} |V(Q_i)|} \leq n^s \exp \left\{- q \cdot \frac{n}{3} \cdot \frac{s}{t_0} \cdot \frac{k}{2}\right\} = o \left(n \binom{n}{s} \right)^{-1},
\end{align}
where the inequality holds since any connected component of $G \setminus S$ (and therefore also of $(G \cup R) \setminus S$) is of order at least $k/2$ and the equality holds since $s \leq 2 n t_0/k$ and $q \geq p/2 \geq \frac{c_1 \log n}{2 n k}$ for a sufficiently large constant $c_1 := c_1(t_0)$.

\medskip

Combining~\eqref{eq::largeComp} and~\eqref{eq::manyComp} we conclude that the probability that there exists a subset $S \subseteq V(G)$ for which the number of connected components of $(G \cup R) \setminus S$ is larger than $\max \{1, |S|/t_0\}$ is at most
$$
\sum_{s=1}^{2 n t_0/k} \binom{n}{s}\left[o \left(\left(n \binom{n}{s} \right)^{-1} \right) + o \left( \left(n \binom{n}{s} \right)^{-1} \right) \right] = o(1).
$$

\medskip

Next, we prove (b). Let $H$ be an $n$-vertex graph consisting of $r := \min \{\lfloor c k \rfloor, \lfloor n/(k+1) \rfloor\} - 1$ pairwise disjoint cliques $Q_1, \ldots, Q_r$ of size $|V(Q_1)| = \ldots = |V(Q_r)| = k+1$ and one additional clique with vertex-set $V(H) \setminus (V(Q_1) \cup \ldots \cup V(Q_r))$. Note that $\delta(H) = k$ and $\alpha(H) \leq c k$. Let $R \sim {\mathbb G}(n,p)$, where $p \leq \frac{c' \log n}{n k}$. For every $i \in [r]$, let $I_i$ be the indicator random variable for the event ``$E_R(V(Q_i), V(H) \setminus V(Q_i)) = \O$'', and let $X = \sum_{i=1}^r I_i$. Standard calculations show that $\mathbb{E}(X) = \omega_n(1)$ and that $\textrm{Var}(X) = o \left((\mathbb{E}(X))^2 \right)$. It thus follows by the second moment method that a.a.s. $E_R(V(Q_i), V(H) \setminus V(Q_i)) = \O$ for some $i \in [r]$. We conclude that a.a.s. $H \cup R$ is disconnected and, in particular, not Hamiltonian.

\medskip

Finally, we prove (c). Let $V$ be a set of size $n$ and let $I \cup A \cup B$ be a partition of $V$ such that $|I| = k-1$ and $|A| = \lceil k/2 \rceil$. Let $H = (V, E)$ be a graph, where 
$$
E = \{xy : x \in I \textrm{ and } y \in A\} \cup \{xy : x \neq y \textrm{ and } \{x,y\} \subseteq A \cup B\}.
$$
Observe that $\alpha(H) = |I| + 1 = k$ and $\delta(H) = |A| \geq k/2$. Deleting $A$ partitions $H[I \cup B]$ into $k$ connected components. It follows that if $R$ is any graph with vertex-set $V$ such that $H \cup R$ is Hamiltonian and thus, in particular, 1-tough, then $r := |\{u \in I : \exists v \in I \cup B \textrm{ such that } uv \in E(R)\}| \geq \lfloor k/2 \rfloor$. However, a straightforward calculation shows that a.a.s. $r < \lfloor k/2 \rfloor$ whenever $R \sim \mathbb{G}(n,p)$ for $p \leq \frac{1}{3 n}$.
\end{proof}

\begin{remark}
While Conjecture~\ref{conj::toughness}, on which the assertion of Theorem~\ref{th::toughHam}(a) relies, is still open, it is well-known (see, e.g.,~\cite{BBS}) that any $t$-tough graph on $n \geq t+1$ vertices, where $t n$ is even, admits a $t$-factor. In particular, it thus follows from our proof of Theorem~\ref{th::toughHam}, that $G \cup \mathbb{G}(n,p)$ a.a.s. admits a perfect matching, whenever $G$ is an $n$-vertex graph for even $n$, $\alpha(G) \leq c \delta(G)$ for a sufficiently small constant $c > 0$, and $p \geq \frac{c' \log n}{n \delta(G)}$ for a sufficiently large constant $c'$. A straightforward adaptation of our proof of Theorem~\ref{th::toughHam}(b) (taking all cliques but at most one to be of odd order) shows that the aforementioned bound on $p$ is essentially best possible. A straightforward adaptation of the proof of Theorem~\ref{th::toughHam}(c) shows that the aforementioned bound on $\alpha(G)$ is essentially best possible.
\end{remark}

\section{Long cycles} \label{sec::cycles}

\begin{proof} [Proof of Theorem~\ref{th::longCycle}]
Starting with (a), let $m = \alpha(G)$. Assume first that $m = O(1)$, and let $G'$ be the graph obtained from $G$ by repeatedly discarding vertices of degree at most $\delta n$, where $\delta := \varepsilon/(2m)$. Then, $\delta(G') \geq \delta n$ and $\alpha(G') \leq m$. Moreover, $|V(G')| \geq (1 - \varepsilon) n$. Indeed, let $S = V(G) \setminus V(G')$ and assume for a contradiction that $|S| \geq \varepsilon n$. Observe that $G[S]$ is $\delta n$-degenerate, implying that 
$$
m = \alpha(G) \geq \alpha(G[S]) \geq \frac{|S|}{\delta n + 1} \geq \frac{\varepsilon n}{\varepsilon n/(2m) + 1} > m,
$$ 
which is a contradiction. Since $p = \omega \left(n^{-2} \right)$, it follows by Theorem~\ref{th::smallAlpha} that a.a.s. $(G \cup \mathbb{G}(n,p))[V(G')]$ is pancyclic. 

We may thus assume for the remainder of the proof that $m = \omega(1)$. Let $\ell = \varepsilon n/(4m)$ and let $r = 4 (1 - \varepsilon/3) \varepsilon^{-1} m$. We claim that $G$ admits pairwise vertex-disjoint paths $P_1, \ldots, P_r$, each of length $\ell - 1$. Indeed, assuming that for some $i \in [r]$, the paths $P_1, \ldots, P_{i-1}$ have already been constructed, the path $P_i$ is found as follows. Let $G_i = G \setminus (V(P_1) \cup \ldots \cup V(P_{i-1}))$, so that $|V(G_i)| = n - (i-1) \ell \geq n - r \ell = \varepsilon n/3$. Let $G'_i$ be the graph obtained from $G_i$ by repeatedly discarding vertices of degree at most $\ell - 1$. Note that $\alpha(G'_i) \leq m$. Moreover, $G'_i$ is non-empty and thus $\delta(G'_i) \geq \ell$ holds by construction. Indeed, let $S = V(G_i) \setminus V(G'_i)$ and assume for a contradiction that $|S| \geq \varepsilon n/3$. Observe that $G_i[S]$ is $(\ell-1)$-degenerate, implying that 
$$
m = \alpha(G) \geq \alpha(G_i[S]) \geq \frac{|S|}{\ell} \geq \frac{\varepsilon n/3}{\varepsilon n/(4m)} > m,
$$ 
which is a contradiction. It follows that $G'_i$ contains a path of length $\ell - 1$ which we denote by $P_i$.

For every $i \in [r]$, let $X_i$ denote the set of the first $\varepsilon \ell/6$ vertices of $P_i$ (according to an arbitrary consistent orientation) and let $Y_i$ denote the set of the last $\varepsilon \ell/6$ vertices of $P_i$. Let $R \sim \mathbb{G}(n,p)$, where $p := p(n) \geq \frac{c_2 m}{n^2}$. Consider the auxiliary random directed graph $D$, where $V(D) = \{u_1, \ldots, u_r\}$ and $(u_i, u_j) \in E(D)$ if and only if $E_R(Y_i, X_j) \neq \O$. For every two distinct indices $i, j \in [r]$, the probability that $(u_i, u_j) \in E(D)$ is 
$$
1 - (1-p)^{|Y_i||X_j|} \geq 1 - \exp \left\{- \varepsilon^2 \ell^2 p/40 \right\} \geq 1 - \exp \left\{- c_2 \varepsilon^4/(640 m) \right\} \geq \frac{c_2 \varepsilon^4}{650 m} \geq \frac{c_2 \varepsilon^3}{250 r},
$$
where the second inequality holds since $p \geq \frac{c_2 m}{n^2}$ and $\ell = \varepsilon n/(4m)$ and the penultimate inequality holds since $m = \omega(1)$ and since $e^{-x} \approx 1-x$ holds whenever $x$ tends to $0$. 

For a sufficiently large constant $c_2$, it follows by Corollary~\ref{cor::AlmostSpanningCycleDnp}, that $D$ contains a directed cycle $C$ of length at least $(1 - \varepsilon/3) r$; assume without loss of generality that $u_1, \ldots, u_t$ are the vertices of $C$. By construction, for every $i \in [t]$, there are vertices $x_i \in X_i$ and $y_i \in Y_i$ which correspond to the vertex $u_i\in C$, that is, some edge of $R$ which is incident with $x_i$ corresponds to the arc of $C$ that enters  $u_i$ and some edge of $R$ which is incident with $y_i$ corresponds to the arc of $C$ that exits $u_i$. Replacing every vertex $u_i$ of $C$ with the corresponding subpath of $P_i$ connecting $x_i$ and $y_i$ and replacing every arc of $C$ with the corresponding edge of $R$ yields a cycle $C'$ of $G \cup R$ of length at least $t (1 - \varepsilon/3) \ell \geq (1 - \varepsilon) n$. Finally, note that if, in addition, $\alpha(G) \leq c_3 \sqrt{n}$, then $(G \cup R)[C']$ is pancyclic by Theorem~\ref{th::AlphaPancyclic}.

\bigskip

Next, we prove (b). Let $H$ be an $n$-vertex graph consisting of $m$ pairwise disjoint cliques $Q_1, \ldots, Q_m$ satisfying $|V(Q_1)| \leq \ldots \leq |V(Q_m)| \leq |V(Q_1)| + 1$. It is evident that in order to obtain a cycle of length at least $c n$, one has to add at least $\frac{c n}{\lceil n/m \rceil} \geq c m/2$ edges to $H$.
\end{proof}

\section{Possible directions for future research} \label{sec::concluding}

As noted in the introduction, it is suggested in~\cite{BFM} that the reason for $K_{n/3, 2n/3}$ requiring linearly many additional edges to become Hamiltonian is that it admits a large independent set; this observation has led to the statement and proof of Theorem~\ref{th::BFMalpha}. However, there may be other explanations for this phenomenon. For example, regularity is sometimes beneficial for Hamiltonicity (see, e.g.,~\cite{KO} and  references therein) and $K_{n/3, 2n/3}$ is far from being regular in the sense that its maximum degree is significantly larger than its minimum degree. The following result asserts that regular graphs can be made Hamiltonian by the addition of relatively few random edges.


\begin{theorem} \label{th::DenseRegular}
Let $G$ be an $r$-regular graph on $n$ vertices. Then, there exists a constant $c > 0$ such that $G \cup {\mathbb G}(n,p)$ is a.a.s. Hamiltonian, whenever $p := p(n) \geq \min \left\{\frac{(1 + o(1)) \log n}{n}, \frac{c \sqrt{n^3 \log n}}{r^3} \right\}$.
\end{theorem}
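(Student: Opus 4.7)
The plan is to prove Theorem~\ref{th::DenseRegular} by a case analysis determined by which of the two bounds in the minimum is active. First, if $r \geq n/2$, Dirac's theorem yields Hamiltonicity of $G$ itself with no random edges required, so we may assume $r < n/2$. Suppose next that $(1+o(1))\log n/n$ is the smaller bound, so that $p \geq (1+o(1))\log n/n$. This places us essentially at the Hamiltonicity threshold of $\mathbb{G}(n,p)$. The sole a.a.s.\ obstruction to Hamiltonicity at this density is the presence of vertices of very small degree, and since $G$ is $r$-regular with $r \geq 1$, every vertex of $G \cup R$ has degree at least $r$. A two-round exposure $R = R_1 \cup R_2$ together with P\'osa's rotation-extension technique, where the edges of $G$ are used to close rotations that would otherwise fail due to low-degree obstructions, yields a.a.s.\ Hamiltonicity of $G \cup R$ in this regime.

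Now suppose the second bound is binding, i.e., $p \geq c\sqrt{n^3\log n}/r^3 \leq (1+o(1))\log n/n$. A short computation shows that this forces $r = \Omega(n^{5/6}/(\log n)^{1/6})$. We proceed via the partition-and-template strategy that underlies the proof of Theorem~\ref{th::smallAlpha}:
\begin{enumerate}
\item Apply Lemma~\ref{lem::highConnectivity} with parameter $k = r$ to obtain a partition $V(G) = V_1 \cup \ldots \cup V_t$, with $t = O(n/r)$, $|V_i| \geq r/8$, and $\kappa(G[V_i]) = \Omega(r^2/n)$.
\item Equipartition each $V_i$ into tiles of size $\Theta(r)$ and split each tile into two halves $B^1_{ij}, B^2_{ij}$; the total number of tiles is $m = \Theta(n/r)$.
\item Form the auxiliary random directed graph $D$ on $[m]$ by declaring $(i_1,j_1) \to (i_2,j_2)$ to be an arc iff $E_R(B^2_{i_1j_1}, B^1_{i_2j_2}) \neq \emptyset$. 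Each arc is present with probability $\approx p\,r^2$, which for $p \geq c\sqrt{n^3\log n}/r^3$ with $c$ large exceeds the Hamiltonicity threshold $(\log m + \omega(1))/m$ of $\mathbb{D}(m, \cdot)$. Theorem~\ref{th::McDiarmid} then ensures that $D$ is a.a.s.\ Hamiltonian.
\item Translate the Hamilton cycle of $D$ into a Hamilton cycle of $G \cup R$ by invoking Theorem~\ref{lem::disjointPaths} inside each $V_i$, with endpoints prescribed by the arcs of the Hamilton cycle of $D$ incident with the tiles in $V_i$.
\end{enumerate}

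The principal obstacle is controlling $\alpha(G[V_i])$ in step (iv): Theorem~\ref{lem::disjointPaths} demands $\kappa(G[V_i]) \geq c\max\{\alpha(G[V_i]), \log n, m_i\}$, and while Lemma~\ref{lem::highConnectivity} supplies the required connectivity and the terms $\log n$ and $m_i$ are handled in our $r$-regime, there is no a priori bound on $\alpha(G[V_i])$ for an arbitrary $r$-regular $G$. The extra $\sqrt{n}$ factor distinguishing our bound $\sqrt{n^3\log n}/r^3$ from the weaker template-only bound $\log(n/r)/(nr)$ is expected to originate here. The resolution should take the form of a probabilistic refinement of the partitioning step: a random sub-partition of $V(G)$ followed by Chernoff-type concentration that exploits $r$-regularity, showing that no large independent set of $G$ concentrates on any single piece $V_i$, thereby guaranteeing $\alpha(G[V_i]) = O(r^2/n)$ uniformly in $i$. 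Quantifying this concentration optimally, and balancing it against the arc-probability calculation in step (iii), is the technical heart of the proof.
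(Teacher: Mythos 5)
You correctly identify the key obstacle in step (4) — bounding $\alpha(G[V_i])$ — but your proposed resolution cannot work, and the gap is fatal. No randomisation of the partition can produce parts with small independence number, because independence is a monotone hereditary property: if $I$ is a large independent set of $G$, then for any partition $V=V_1\cup\ldots\cup V_t$ the sets $I\cap V_i$ are independent in $G[V_i]$, so $\max_i \alpha(G[V_i]) \geq |I|/t$. Taking $G=K_{n/2,n/2}$, an $r$-regular graph with $r=n/2$ and $\alpha(G)=n/2$, and noting $t=O(n/r)=O(1)$ in that regime, some part has $\alpha(G[V_i])=\Omega(n)$, whereas you would need $\alpha(G[V_i])=O(r^2/n)=O(n)$ with a \emph{small} implied constant so that $\kappa(G[V_i])\geq c\,\alpha(G[V_i])$ holds for the (large) constant $c$ demanded by Theorem~\ref{lem::disjointPaths}. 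For $K_{n/2,n/2}$ every induced subgraph has $\kappa\leq\alpha$, so the hypothesis of Theorem~\ref{lem::disjointPaths} can never be met. Regularity controls neither $\alpha(G)$ nor $\alpha(G[V_i])$, and the ``partition-and-template'' machinery from Theorem~\ref{th::smallAlpha} — which crucially exploits the assumption $\alpha(G)=O(\delta^2 n)$ — is simply the wrong tool here. The paper instead proceeds by an entirely different route: it uses $r$-regularity to extract a near-2-factor, namely a collection of at most $2\sqrt{n^3\log n}/r$ pairwise disjoint cycles covering all but that many vertices (Theorem~\ref{th::regGraph2factor}), and then uses the fact that any $r$-regular graph is an $(r/3,2)$-expander to absorb the leftover vertices and merge the cycles into one via the random edges (Theorem~\ref{th::FewCyclesExpansion}). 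The expansion property replaces the independence-number control that your approach lacks, and the $\sqrt{n^3\log n}/r$ count of cycle-plus-leftover pieces is exactly what produces the $\sqrt{n^3\log n}/r^3$ in the edge-probability bound, once divided by the expander quality $\gamma^2 n^2=\Theta(r^2)$.

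A minor remark on the first case: there is no need for two-round exposure and P\'osa rotations. When $p\geq(1+o(1))\log n/n$ (with the $o(1)$ absorbing the lower-order terms in the Hamiltonicity threshold), the random graph ${\mathbb G}(n,p)$ is a.a.s.\ Hamiltonian on its own, so $G\cup{\mathbb G}(n,p)$ is trivially a.a.s.\ Hamiltonian; the seed graph $G$ plays no role.
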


Our proof of Theorem~\ref{th::DenseRegular} makes use of the following result which we believe to be of independent interest.
\begin{theorem} \label{th::FewCyclesExpansion} 
Let $G$ be a $(\gamma n, 2)$-expander\footnote{A graph $G$ is said to be a \emph{$(k,d)$-expander} if $|N_G(S)| \geq d |S|$ holds for every subset $S \subseteq V(G)$ of size at most $k$.} on $n$ vertices, for some $\gamma := \gamma(n)$. Let $C_1, \ldots, C_k$, where $k := k(n)$, be pairwise vertex-disjoint cycles in $G$ and let $\ell = |V(G) \setminus (V(C_1) \cup \ldots \cup V(C_k))|$. Then, $G \cup {\mathbb G}(n,p)$ is a.a.s. Hamiltonian, whenever 
$$
p := p(n) \geq 
\begin{cases}
\omega \left((\gamma n)^{-2} \right) & \emph{if } \; k + \ell = O(1), \\
\frac{c (k + \ell)}{\gamma^2 n^2} & \emph{if } \; k + \ell = \omega(1),
\end{cases}
$$
where $c$ is a sufficiently large constant.
\end{theorem}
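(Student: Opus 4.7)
The plan is to combine Pósa's rotation-extension technique with the given cycle structure of $G$: the essential efficiency gain is that a single booster can absorb an entire uncovered cycle at once (by walking around its original edges), so only $k + \ell$ boosters are needed in total, matching the theorem's bound on $p$.

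I would open an arbitrary cycle, say $C_1$, at one edge to obtain a starting path $P_0$ on $|V(C_1)|$ vertices, and then reveal the edges of $R$ one at a time. At every stage, $P$ denotes the current longest path in $G \cup R'$, where $R' \subseteq R$ is the portion revealed so far. Since adding edges preserves the $(\gamma n, 2)$-expansion property, $G \cup R'$ remains a $(\gamma n, 2)$-expander. The classical P\'osa booster lemma for such expanders then guarantees that, as long as $G \cup R'$ is not Hamiltonian, it contains at least $\Omega((\gamma n)^2)$ \emph{boosters} --- non-edges $e$ such that $(G \cup R') + e$ has a strictly longer longest path, or is Hamiltonian. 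The crucial cycle-absorption observation is: any booster $e$ extends $P$ by at least one new vertex $v$; if $v$ lies on a still-uncovered cycle $C_j$, we can then extend $P$ further by walking around $C_j$ along its original edges, absorbing the entire $V(C_j)$ in one step. Hence each booster reduces the number of uncovered components (cycles plus extras) by at least one, so $k - 1 + \ell$ boosters suffice to make $P$ a Hamilton path, after which one additional closing booster (again supplied by the classical lemma, with $\Omega((\gamma n)^2)$ candidates) closes $P$ into a Hamilton cycle.

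Probabilistically, each random edge is a booster with probability $\Omega(\gamma^2)$ regardless of the current state. The number of random edges needed to find the required $k + \ell$ boosters is the sum of $k + \ell$ geometric random variables with parameter $\Omega(\gamma^2)$, which is $\Theta((k + \ell)/\gamma^2)$ in expectation with sub-exponential tails. This matches the $\Theta(p n^2) = \Theta((k + \ell)/\gamma^2)$ random edges available under the hypothesis $p \geq c(k + \ell)/(\gamma n)^2$, and a Chernoff-type concentration completes the argument. The first case of the theorem ($k + \ell = O(1)$, $p = \omega((\gamma n)^{-2})$) is handled identically, with $\omega(1)$ boosters sufficing for the bounded number of absorptions.

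The main obstacle I anticipate concerns the booster probability being $\Omega(\gamma^2)$ \emph{in the current state}: the booster set is state-dependent and changes as $P$ evolves, so one must track which random edges happen to be boosters at the right moment. This is handled by a sprinkling argument, revealing the edges of $R$ sequentially and analyzing the process as a Markov chain whose state is the pair (current path $P$, set of uncovered components); the classical booster lemma guarantees that each reachable state admits $\Omega((\gamma n)^2)$ boosters out of $\binom{n}{2}$ potential edges, and standard bounds on the hitting times of sums of geometric random variables yield the required concentration. A secondary technicality is verifying the ``walk around $C_j$'' step in the presence of P\'osa rotations, which follows routinely from $V(C_j) \cap V(P) = \emptyset$ prior to absorption of $C_j$. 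Together these give the stated bound on $p$.
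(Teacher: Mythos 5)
The paper does not actually include a proof of this theorem: the authors state only that it follows by a ``fairly straightforward adaptation of the main part of the proof of Theorem 2 in~\cite{BFM}'' (a P\'osa-type rotation--extension argument with sprinkling) and omit all details. Your proposal supplies essentially that adaptation: open $C_1$ into a starting path, use P\'osa rotations to exhibit $\Omega((\gamma n)^2)$ boosters at every non-Hamiltonian stage, and sprinkle the random edges so that a.a.s.\ enough of them land on boosters. The genuinely new observation you isolate --- that once a booster's extension vertex $v$ lies on a still-uncovered cycle $C_j$, the path can be prolonged around all of $C_j$ using only edges of $G$, so that one booster per uncovered cycle (plus one per leftover vertex, plus one to close) suffices --- is precisely what yields the $k+\ell$ dependence in the threshold, so your proof follows the intended route.

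Two points in your sketch need tightening if written out. First, you let $P$ be ``the current longest path in $G \cup R'$,'' but the absorption argument requires $P$ to be a \emph{specific constructed} path whose intersection with each $C_j$ is all-or-nothing; the longest path of $G$ need not respect this invariant, and after walking around a cycle your path need not be globally longest either. The remedy is to track a path $Q$ initialized to $C_1$ minus an edge and grown explicitly; P\'osa's rotation lemma still applies to a stuck (i.e.\ not freely extendable) $Q$ even when $Q$ is not longest, since otherwise some rotation endpoint of $Q$ would have a neighbour outside $V(Q)$ and you could extend for free, and rotations preserve $V(Q)$, so the all-or-nothing invariant survives. Second, a $(\gamma n, 2)$-expander need not be connected, so the claim that $\Omega((\gamma n)^2)$ boosters exist should also cover the case where $Q$ spans an entire component of $G \cup R'$: there the rotation endpoints alone do not give an extension, but expansion forces every component to have size $\Omega(\gamma n)$, so inter-component pairs provide the required supply of boosters. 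With these points made precise, the stochastic-domination-by-geometrics analysis you sketch (Markov for $k+\ell = O(1)$, Chebyshev for $k+\ell = \omega(1)$, together with the fact that the number of already-revealed edges stays well below $(\gamma n)^2$ in the relevant range of $p$) closes the argument.
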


One may prove Theorem~\ref{th::FewCyclesExpansion} via a fairly straightforward adaptation of the main part of the proof of Theorem 2 in~\cite{BFM}; we omit the details. Theorem~\ref{th::FewCyclesExpansion} suggests the following course of action: find a small number of pairwise vertex-disjoint cycles in a given graph, covering  most of its vertices, and then connect them and the remaining vertices so as to obtain one spanning cycle. The following result asserts that such a collection of cycles exists in regular graphs which are not too sparse.

\begin{theorem} \label{th::regGraph2factor}
Let $G$ be an $r$-regular $n$-vertex graph, where $r := r(n) = \omega \left(\sqrt{n \log n} \right)$. Then, $G$ contains pairwise vertex-disjoint cycles $C_1, \ldots, C_k$ such that $k + |V(G) \setminus (V(C_1) \cup \ldots \cup V(C_k))| \leq 2 \sqrt{n^3 \log n}/r$.
\end{theorem}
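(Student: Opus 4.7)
My plan is to construct the desired cycle collection from a 2-factor of $G$, obtained via Petersen's theorem, and then iteratively reduce the number of cycles by merging short ones using edges of $G$ that are not in the 2-factor.

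First, I handle the existence of a 2-factor. If $r$ is even, Petersen's theorem directly yields a 2-factor of $G$. If $r$ is odd, then $rn = 2|E(G)|$ is even, so $n$ is even; in this case $G$ contains a perfect matching $M$ (by a classical theorem), and removing $M$ yields an $(r-1)$-regular graph to which Petersen's theorem applies. In either case, let $F$ be the resulting 2-factor and write its vertex-disjoint cycles as $C_1, \ldots, C_t$, so they span $V(G)$.

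Set $L := \sqrt{n^3 \log n}/r$. If $t \leq 2L$ we are done with $\ell = 0$. Otherwise, the average length of a cycle in $F$ is less than $n/(2L) = \tfrac{1}{2}r/\sqrt{n \log n}$, so most cycles of $F$ are short, and we reduce $t$ as follows. Two vertex-disjoint cycles $C, C' \in F$ can be replaced by a single Hamilton cycle on $V(C) \cup V(C')$ whenever there exist edges $uu', vv' \in E(G) \setminus E(F)$ with $u,v$ consecutive on $C$ and $u',v'$ consecutive on $C'$ in a compatible cyclic orientation (one simply deletes $uv$ and $u'v'$ and inserts $uu', vv'$). Since $G \setminus F$ is $(r-2)$-regular, between any pair of cycles whose sizes are not both tiny there are many edges of $G \setminus F$, and hence many opportunities to find such a merging configuration by a pigeonhole argument on consecutive edge-pairs.

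The procedure iterates: while $t + \ell > 2L$, either merge two short cycles (reducing $t$ by one and leaving $\ell$ unchanged) or, if no valid merge exists for some short cycle, discard it (reducing $t$ by one but adding its length to $\ell$). The main obstacle, and the main technical content of the proof, is the bookkeeping that shows this process terminates with $t + \ell \leq 2L$. It requires showing that discards are sufficiently rare, i.e., that the density of $G \setminus F$ almost always produces a valid merging pair between a short cycle and some other cycle. Concretely, one counts the edges of $G \setminus F$ incident to a short cycle (there are $\Theta(r)$ of them per vertex) and shows via an extremal/double counting argument that among them one can find two whose endpoints form the required consecutive pattern on the target cycles, as long as the combined length of the two cycles exceeds roughly $n/r \cdot \sqrt{\log n}$. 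The $\sqrt{n^3 \log n}/r$ threshold arises precisely from this balance: if more short cycles than this were present, the edge density of $G \setminus F$ within them would force a merge, while the $\sqrt{\log n}$ factor accommodates a union bound over the choices of cycle pairs.
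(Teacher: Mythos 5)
The paper's proof is not given in full; it points to Corollary~2.8 of~\cite{FKS}, whose argument is a counting/first-moment argument: one lower-bounds the number of permutation cycle covers (perfect matchings of the bipartite double cover) via a permanent estimate, upper-bounds the number of such covers exhibiting too many short cycles, and concludes by Markov that some cover has few short cycles; short cycles are then discarded (contributing to $\ell$), and the remaining long cycles are few in number (contributing to $k$). Your proposal --- start from a Petersen $2$-factor and greedily merge cycles using $G \setminus F$ edges --- is a genuinely different route, and as written it has two real gaps.

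First, a technical error: for $r$ odd you claim $G$ has a perfect matching ``by a classical theorem.'' This is false in general for odd-regular graphs; Tutte's condition can fail (for example at a cut vertex with several odd components), and large $r$ does not rescue the claim. The FKS route avoids the issue entirely by working with a perfect matching of the regular \emph{bipartite} double cover (which always exists by K\"onig/Hall), yielding an oriented cycle cover in which degenerate $2$-cycles are allowed and can simply be thrown into~$\ell$.

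Second, and more seriously, the step you yourself flag as ``the main technical content'' is missing, and I do not believe it can be supplied in the form you describe. Petersen gives you an \emph{arbitrary} $2$-factor $F$, whose cycle structure may be adversarial with respect to the chords in $G \setminus F$. For a fixed short cycle $C$ and a fixed $F$-edge $uv \in C$, a merge requires a $4$-cycle $uu'v'v$ with $u'v' \in F$ and $uu',vv' \in G\setminus F$; the roughly $r$ non-$F$ neighbours of $u$ produce only about $2r$ candidate targets for $v$, and for $r \ll n$ nothing forces $v$ to hit any of them. So ``discards are sufficiently rare'' is exactly the unproven claim on which the whole bound rests, and a straight greedy process from a worst-case $2$-factor gives no control over the total length of discarded cycles. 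By contrast, the FKS counting argument averages over \emph{all} cycle covers, and the $\sqrt{\log n}$ factor in the bound $2\sqrt{n^3\log n}/r$ arises naturally from a union bound over prescribed short-cycle sets in that framework --- a mechanism your greedy process does not reproduce. To make your approach rigorous you would at minimum need to randomise the choice of $2$-factor and prove a concentration/counting statement about merge availability, at which point you would essentially be redoing the FKS computation.
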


The proof of Theorem~\ref{th::regGraph2factor} is a straightforward adaptation of the proof of Corollary 2.8 in~\cite{FKS} where the same result is proved for $r$ which is linear in $n$; we omit the details. 

\begin{proof} [Proof of Theorem~\ref{th::DenseRegular}]
If $p \geq (1 + o(1)) \log n/n$, then ${\mathbb G}(n,p)$ is a.a.s. Hamiltonian and thus so is $G \cup {\mathbb G}(n,p)$. Assume then that $p = \frac{c \sqrt{n^3 \log n}}{r^3} < \frac{(1 + o(1)) \log n}{n}$, implying that $r = \Omega \left(\frac{n^{5/6}}{(\log n)^{1/6}} \right)$. Applying Theorem~\ref{th::regGraph2factor} to $G$ implies that it contains pairwise vertex-disjoint cycles $C_1, \ldots, C_k$ such that $k + |V(G) \setminus (V(C_1) \cup \ldots \cup V(C_k))| \leq 2 \sqrt{n^3 \log n}/r$. Since, moreover, $G$ is clearly an $(r/3, 2)$-expander, it follows by Theorem~\ref{th::FewCyclesExpansion} that $G \cup {\mathbb G}(n,p)$ is a.a.s. Hamiltonian. 
\end{proof} 
   
It would be interesting to find additional properties of $G$ (other than having a large minimum degree, a small independence number, or being regular) for which $G \cup \mathbb{G}(n,p)$ is a.a.s. Hamiltonian (or even pancyclic) for relatively small values of $p := p(n)$.

\section*{Acknowledgements} We thank Noga Alon for drawing our attention to reference~\cite{CGGHK}.

\end{document}